  \theoremstyle{plain}
    \newtheorem{Thm}{Theorem}[section]
    \newtheorem{Prop}[Thm]{Proposition}
   \newtheorem{Lem}[Thm]{Lemma}
    \newtheorem{Cor}[Thm]{Corollary}
    \newtheorem{subsec}[Thm]{}
\theoremstyle{definition}
    \newtheorem{Def}[Thm]{Definition}
    \newtheorem{Exm}[Thm]{Example}
\theoremstyle{remark}
     \newtheorem{Rem}[Thm]{Remark}
\title{}
\author{}
\date{}
\begin{document}
\title{Deformation Cohomology of Lie-Yamaguti algebras and Free Lie-Yamaguti algebras}

\author{Saikat Goswami}
\email{saikatgoswami.math@gmail.com}
\address{TCG CREST, Institute for Advancing Intelligence, Kolkata, India.}
\address{RKMVERI, Belur, Howrah, India.}

\subjclass[2020]{17A30, 17A40, 17A42, 17D99}
\keywords{Non-associative algebra;  Lie-Yamaguti algebra; Deformation; Cohomology.}

\thispagestyle{empty}

\begin{abstract}
Infinitesimal deformation theory of Lie-Yamaguti algebras was introduced by Tao Zhang and Juan Li . We extend their theory to develop formal one-parameter deformation theory of Lie-Yamaguti algebras. It turns out that the right deformation cohomology which controls deformations in this context, is a variant of Lie-Yamaguti algebra cohomology which we introduce in this paper. At the end, we introduce the free object in the category of Lie-Yamaguti algebras and prove that it is rigid.
\end{abstract}
\maketitle


\vspace{-0.8cm}
\section{\Large Introduction}\label{$1$} 
Deformation theory originated from Riemann's 1857  memoir on abelian functions in which he studied complex manifolds of dimension $1$ (Riemann surfaces) and calculated the number of parameters (called modulii) upon which a deformation depends. His idea was later extensively developed to study deformations of structures on manifolds by Frolicher-Kodaira-Nijenhuis-Nirenberg-Spencer (\cite{KS1}, \cite{KS2}, \cite{KNS}, \cite{Spencer1}, \cite{Spencer2}). 

The study of deformations of algebraic structures was initiated by M. Gerstenhaber (\cite{MG1}, \cite{MG2}, \cite{MG3}, \cite{MG4}, \cite{MG5}) which laid the foundation of Algebraic Deformation Theory. His theory was extended to Lie algebras by A. Nijenhuis and R. Richardson (\cite{NR1}, \cite{NR2}, \cite{NR3}).

J. L. Loday introduced non anti-symmetric analogue of Lie algebras, known as Leibniz algebras and some more binary quadratic algebras (with more than one binary operation) satisfying some intriguing relations, generally known as Loday algebras in \cite{Loday1}, \cite{Loday2 } and introduced related operads.

Algebraic Deformation theory of Dialgebras which involve two binary operations was developed in \cite{MM1}, \cite{MM2}. All the deformation theory mentioned above are formal one parameter deformations.

Algebraic deformation theory for Leibniz algebras over a commutative local algebra base was developed in \cite{FMM} and the versal object was constructed. 

In \cite{FMN}, the authors developed  Versal deformation theory of algebras over a quadratic operad.

M. Gerstenhaber in his paper remarked that his method would extend to any equation-ally defined algebraic structure. Therefore, it is still an active area of research to develop deformation theory of a given algebraic structure defined on a vector space.

\begin{Rem}\label{feature}
Any deformation theory should have the following features:
\noindent

\begin{enumerate}
\item A class of objects within which deformation takes place and a cohomology theory (deformation cohomology) associated to those objects which controls the deformation in the sense that infinitesimal deformation of a given object can be identified with the elements of a suitable cohomology group.

\item A theory of obstruction to the integration of an infinitesimal deformation.

\item A notion of equivalence of deformed objects and a notion of rigid objects.
\end{enumerate}
\end{Rem}
The notion of  Lie triple system was formally introduced as an algebraic object by Jacobson \cite{NJ} in connection with problems which arose from quantum mechanics (cf. \cite{Duffin}). Nomizu \cite{KN} proved that affine connections
with parallel torsion and curvature are locally equivalent to invariant connections
on reductive homogeneous spaces, and that each such space has a canonical
connection for which parallel translation along geodesics agrees with the natural
action of the group. K. Yamaguti \cite{KY} introduced the notion of {\it general Lie triple system} as a vector space equipped with a pair consisting of a bilinear and a trilinear operation and satisfying some intriguing relations  in order to characterize the torsion and curvature tensors of Nomizu's canonical connection. M. Kikkawa \cite{MK75} observed that the problem is related to the canonical connection and to the general Lie triple system defined on the tangent space $T_eM.$ In this paper, Kikkawa renamed the notion of general Lie system as {\it Lie triple algebra}. Kinyon and Weinstein \cite{KW} observed that Lie triple algebras, which they called Lie-Yamaguti algebras in their paper, can be constructed from Leibniz algebras. 

In \cite{ZT}, the author studied infinitesimal deformations of a Lie triple system using the cohomology groups as introduced by K. Yamaguti in \cite{KY-LTS}. Later, in \cite{ZL}, the authors studied infinitesimal deformations and extensions of Lie-Yamaguti algebras. They proved that infinitesimal deformations and abelian extensions of a Lie-Yamaguti algebra can be characterized by the $(2,3)$-cohomology group of the Lie-Yamaguti algebra (\cite{KY}, \cite{KY-cohomology}). Because of the complicated nature of the cohomology of Lie-Yamaguti algebras, till date it is not clear as to how one should go about developing an obstruction theory to integrate an infinitesimal deformation of a Lie-Yamaguti algebra to a full-blown formal one parameter deformation.

An attempt has been made in \cite{LCM} to develop a formal one parameter deformation theory of Lie-Yamaguti algebras. Unfortunately, there are serious mistakes in the paper, particularly, in Section $6.$ In Lemma $6.2,$ it has been assumed that the coefficients in a deformation of order $(n-1)$ are cocycles. This is an awkward and unreasonable assumption. In a finite order deformation, only the infinitesimal is a cocycle. The proof of the lemma is based on this assumption, and hence, obstruction theory to integrate a finite order deformation to a full-blown deformation has not been developed correctly. 

The aim of this paper, is to correct this mistake in the literature and develop a formal deformation theory extending the work of Tao Zhang and Juan Li \cite{ZL} of Lie-Yamaguti algebras following the features listed in Remark \ref{feature} mentioned above. The cohomology as defined by Yamaguti cannot be used directly as the deformation cohomology unless the cochain complex defining the cohomology is modified suitably. To do this, we notice that all the necessary tools are already in-built in the Yamaguti's definition. We modify it suitably to get the deformation cohomology which controls deformations in the present context. 

\medskip
\noindent
{\bf Organization of the paper}: In \S \ref{$2$}, we set up notations, recall some known definitions and results. In \S \ref{$3$}, we introduce the notion of formal one-parameter deformation of a Lie-Yamaguti algebra over a field $\mathbb K$ of characteristic $0.$ Next, we introduce the notion of an infinitesimal of a deformation, and show that infinitesimal of a deformation is a $(2, 3)$-cocycle. In \S \ref{$4$},  we address the question of integrability of a given $(2,3)$-cocycle in $Z^{(2, 3)}(L; L).$ We show that it is necessary to modify the cochain complex of Yamaguti as introduced in Section \ref{$2$} to obtain the deformation cohomology that controls deformations of Lie-Yamaguti algebras. In order to extend a deformation of order $n$ to a deformation of order $n+1$, there are obstructions. We identify the obstruction as a suitable cocycle of the deformation complex. We show that there exists a sequence of cohomology classes, vanishing of which is a necessary and sufficient condition for a given $(2,3)$-cocycle to be integrable. In \S \ref{$5$}, we introduce the notion of equivalence of deformations and rigid object. At the end, in \S \ref{$6$} we introduce the free object in the category of Lie-Yamaguti algebras and prove that it is rigid.

\vspace{0.5cm}
\section{\Large Preliminaries}\label{$2$}
Throughout this paper, we work on an algebraically closed field $\mathbb K$ of characteristic $0$.
We begin by recalling some basic definitions and set up our notations.
\begin{Def}
A Lie algebra is a vector space $\mathfrak g$ over $\mathbb K$ equipped with a $\mathbb K$-bilinear operation $[~,~] : \mathfrak g\times \mathfrak g \rightarrow \mathfrak g$ satisfying
\begin{enumerate}
\item (Anti-symmetry): $[x, y] = -[y, x]$ for all $x, y \in \mathfrak g$;
\item (Jacobi identity): $[[x,y],z] + [[y,z],x] + [[z,x],y] =0$  for all $x, y, z \in \mathfrak g.$ 
\end{enumerate}
\end{Def}

\begin{Def}
A Leibniz algebra is a vector space $\mathfrak g$ over $\mathbb K$ equipped with a $\mathbb K$-bilinear operation $\cdot : \mathfrak g \times \mathfrak g \rightarrow \mathfrak g$ satisfying the Leibniz identity
$$ x\cdot (y\cdot z) = (x\cdot y)\cdot z + y\cdot (x\cdot z) \qquad \text{for all}\quad x, y, z \in \mathfrak g.$$
\end{Def}
It is easy to see that in presence of the anti-symmetric condition the Leibniz identity reduces to Jacobi identity. Thus, Lie algebras are examples of Leibniz algebras. See \cite{Loday1} for many other non-trivial examples of Leibniz algebras.

\begin{Def}
A Lie triple system is a vector space $\mathfrak g$ over $\mathbb K$ equipped with a $\mathbb K$-trilinear operation 
$$\{~,~,~\} : \mathfrak g\times \mathfrak g\times \mathfrak g \rightarrow \mathfrak g$$
satisfying the following three identities
\begin{enumerate}
\item $\{ x, y, z\} = -\{ y, x, z\}$ for all $x, y, z \in \mathfrak g$;
\item $\{ x,y,z\} + \{ y,z,x\} + \{ z,x,y\} = 0$ for all $x, y, z \in \mathfrak g$;
\item $\{ x, y, \{ u, v, w\} \} = \{ \{ x, y, u \} , v, w\} + \{ u, \{ x, y, v\} , w\} + \{ u, v, \{ x, y, w\} \} $\\ for all $x, y, u, v, w \in \mathfrak g.$
\end{enumerate}
\end{Def}

The following is an interesting example of a Lie triple system which arose from Physics \cite{NJ}.

\begin{Exm}\label{Meson}
We denote by $M_{n+1}(\mathbb R)$, the set of all $(n+1) \times (n+1)$ matrices over the field $\mathbb R$, which is an associative algebra with respect to matrix multiplication.  Let $\delta_{ij}$ denote the Kronecker delta symbol
\[ \delta_{ij}= \left\{\begin{array}{ll}
0 & i \neq j \\
1 & i=j 
\end{array}
\right.\]
and $e_{i,j}$ denote the elementary matrix which has $1$ in the $(i,j)$-entry as its only non-zero entry.
Let $\mathfrak g$ be the subspace of $M_{n+1}(\mathbb R)$ spanned by the matrices $G_i$ for $i=1,2,\cdots,n$, where $G_i = e_{i,n+1}-e_{n+1, i}.$ 
\vspace{0.1in}
As an example, for $n=3,$ the matrix $G_2 \in M_4(\mathbb R)$ is given by
\[
G_2 =
\begin{pmatrix}
0 & 0 & 0 & 0 \\
0 & 0 & 0 & 1 \\
0 & 0 & 0 & 0 \\
0 & -1 & 0 & 0 \\
\end{pmatrix}.
\]
\noindent
Then, the subspace $\mathfrak g$ is closed under the ternary product 
$$\{A, B, C\}:=[[A,B],C], ~~A, B, C \in \mathfrak g$$ where $[A,B] := AB - BA$ is the commutator bracket. Explicitly, the trilinear product of the basis elements are given by
\[
	[[G_i,G_j],G_k] = \delta_{ki} G_j - \delta_{kj} G_i.
\] 
It turns out that $(\mathfrak g, \{~, ~,~\})$ is a Lie triple system, first used in \cite{Duffin} to provide a significant and elegant algebraic formalism of Meson equations and was introduced formally as a Lie triple system by Jacobson \cite{NJ} and hence known as Meson field.
\end{Exm}
\begin{Rem}
Note that any Lie algebra $(\mathfrak g, [~,~])$ can be viewed as a Lie triple system with the trilinear operation
$$\{ x, y, z\} := [[x, y], z]$$  for all $x, y, z \in \mathfrak g.$
\end{Rem}
Next, we recall the notion of Lie-Yamaguti algebra and its cohomology.

\vspace{0.1in}
\noindent
{\bf Lie-Yamaguti Algebra:}
\begin{Def}
A Lie-Yamaguti algebra $(L, [~,~], \{~, ~, ~\})$ is a vector space $L$ equipped with a $\mathbb K$-bilinear operation 
$$[~, ~]: L\times L \rightarrow L$$ and a $\mathbb K$-trilinear operation 
$$\{~, ~, ~\} : L\times L\times L \rightarrow L$$ such that for all $x, y, z, u, v, w \in L$ the following relations hold:
\begin{equation}
  [x, y] = -[y,x]; \tag{LY1} \label{LY1} 
\end{equation}
\begin{equation}
 \{x, y, z\} = -\{y,x, z\}; \tag{LY2} \label{LY2}
\end{equation}
\begin{equation}
\Sigma_{\circlearrowleft (x, y, z)}([[x,y], z] + \{x, y, z\}) =0;  \tag{LY3} \label{LY3}
\end{equation}
\begin{equation}
\Sigma_{\circlearrowleft (x, y, z)}\{[x, y], z, u\} = 0;  \tag{LY4} \label{LY4}
\end{equation}
\begin{equation}
\{x, y, [u, v]\} = [\{x, y, u\}, v] + [u, \{x, y, v\}];  \tag{LY5} \label{LY5}
\end{equation}
\begin{equation}
\{x, y, \{u, v, w\}\}\\
= \{\{x, y, u\}, v, w\} + \{u, \{x, y, v\}, w\} + \{u, v, \{x, y, w\}\}.  \tag{LY6} \label{LY6} 
\end{equation}
Here, $\Sigma_{\circlearrowleft (x,y,z)}$ denotes the sum over cyclic permutations of x, y, and z.
\end{Def}

\begin{Rem}
Notice that if the trilinear product in a Lie-Yamaguti algebra is trivial, that is, if $\{~,~,~\} = 0,$ then (LY$2$), (LY$4$), (LY$5$), and (LY$6$) are trivial, and (LY$1$) and
(LY$3$) define a Lie algebra structure on $L$. On the other hand, if the binary product is trivial, that is, $[~,~] =0,$ then (LY$1$), (LY$4$), and (LY$5$) are trivial, and (LY$2$), (LY$3$), together with (LY$6$) define a Lie triple system on $L.$
\end{Rem}

The following result is well-known. 

\begin{Lem}\label{Lie to LYA}
Let $(\mathfrak g, [~, ~])$ be a Lie algebra over $\mathbb K$. Then, $\mathfrak g$ has a Lie-Yamaguti algebra structure induced by the given Lie bracket, the trilinear operation being:
$$\{a, b, c\} = [[a, b], c]$$ for all $a, b, c \in \mathfrak g$.
\end{Lem}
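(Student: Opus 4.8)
The plan is to verify directly that $([~,~],\{~,~,~\})$ with $\{a,b,c\}=[[a,b],c]$ satisfies the six axioms \eqref{LY1}--\eqref{LY6}, using only antisymmetry and the Jacobi identity of the Lie bracket. The one observation that organizes the whole computation is that for each $a\in\mathfrak g$ the adjoint map $\mathrm{ad}_a=[a,-]$ is a derivation of $[~,~]$; equivalently, Jacobi can be rewritten in the Leibniz form $[a,[b,c]]=[[a,b],c]+[b,[a,c]]$. I would state this reformulation first so the remaining steps become immediate.

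First, \eqref{LY1} is exactly the antisymmetry of $[~,~]$, and \eqref{LY2} follows at once since $\{x,y,z\}=[[x,y],z]=[-[y,x],z]=-\{y,x,z\}$. For \eqref{LY3}, observe that $[[x,y],z]+\{x,y,z\}=2[[x,y],z]$, so the cyclic sum equals $2\,\Sigma_{\circlearrowleft (x,y,z)}[[x,y],z]$, which vanishes by Jacobi. For \eqref{LY4}, pull the outer bracket out of the cyclic sum: $\Sigma_{\circlearrowleft (x,y,z)}\{[x,y],z,u\}=\big[\Sigma_{\circlearrowleft (x,y,z)}[[x,y],z],\,u\big]=[0,u]=0$, again by Jacobi.

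The last two axioms are where the derivation viewpoint does the work. For \eqref{LY5}, put $a=[x,y]$; then $\{x,y,[u,v]\}=[a,[u,v]]$, and the Leibniz form of Jacobi gives $[a,[u,v]]=[[a,u],v]+[u,[a,v]]=[\{x,y,u\},v]+[u,\{x,y,v\}]$. For \eqref{LY6}, again set $a=[x,y]$; then $\{x,y,\{u,v,w\}\}=[a,[[u,v],w]]$, and applying the derivation property of $\mathrm{ad}_a$ twice yields $[a,[[u,v],w]]=[[a,[u,v]],w]+[[u,v],[a,w]]=[[[a,u],v],w]+[[u,[a,v]],w]+[[u,v],[a,w]]$, which is precisely $\{\{x,y,u\},v,w\}+\{u,\{x,y,v\},w\}+\{u,v,\{x,y,w\}\}$.

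There is no serious obstacle here: the statement is classical and the proof is pure bookkeeping. The only place needing mild care is tracking the three summands in \eqref{LY6} and matching the cyclic sums in \eqref{LY3}--\eqref{LY4} against the correct form of the Jacobi identity; writing Jacobi once in the Leibniz/derivation form at the outset removes essentially all of that friction.
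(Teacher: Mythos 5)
Your proof is correct and complete: the paper states this lemma as well-known and gives no proof, and your direct verification of \eqref{LY1}--\eqref{LY6} using the derivation (Leibniz) form of the Jacobi identity is exactly the standard argument one would write. In particular the two delicate points are handled properly: the cyclic sums in \eqref{LY3}--\eqref{LY4} reduce to the Jacobi identity by bilinearity, and applying $\mathrm{ad}_{[x,y]}$ as a derivation twice gives precisely the three terms required in \eqref{LY6}.
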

\begin{Exm}
Let $(\mathfrak g, \cdot)$ be a Leibniz algebra. Define a bilinear operation and a trilinear operation as follows:
$$[~, ~]: \mathfrak g \times \mathfrak g \to \mathfrak g, \quad [a, b] := a\cdot b-b\cdot a,~~ a, b \in \mathfrak g;$$
$$\{~, ~, ~\}:  \mathfrak g \times \mathfrak g \times \mathfrak g \to \mathfrak g, \quad \{a, b, c\} := -(a\cdot b)\cdot c,~~a, b, c \in \mathfrak g.$$ Then, $(\mathfrak g, [~,~],\{~,~,~\})$ is a Lie-Yamaguti algebra. 
\end{Exm}
Let $(\mathfrak g, \langle~,~\rangle)$ be a Lie algebra. Recall that a reductive decomposition of
$\mathfrak g$ is a vector space direct sum $\mathfrak g = \mathfrak h \oplus\mathfrak m$ satisfying $\langle\mathfrak h, \mathfrak h\rangle \subseteq\mathfrak h$ and $\langle\mathfrak h, \mathfrak m\rangle \subseteq \mathfrak m.$ In this case, we call $(\mathfrak h, \mathfrak m)$ a reductive pair.

\begin{Exm}\label{reductive}
Let $(\mathfrak g, \langle~, ~\rangle)$ be a Lie algebra with a reductive decomposition $\mathfrak g = \mathfrak h \oplus\mathfrak  m.$ Then, there exist a natural binary and ternary products on $\mathfrak m$ defined by
$$ [a, b] := \pi_{\mathfrak m}(\langle a, b\rangle),\qquad\{a, b, c\} := \langle \pi_{\mathfrak h}(\langle a, b\rangle), c \rangle,$$ where $\pi_{\mathfrak m}$ and $\pi_{\mathfrak h}$ are the projections on $\mathfrak m$ and $\mathfrak h$, respectively. These products endow $\mathfrak m$ with the structure of a Lie-Yamaguti algebra \cite{BBM}.
\end{Exm}

\begin{Exm}
Consider the vector space $L$ over $\mathbb K$ generated by $\{e_1, e_2, e_3\}.$ Define a bilinear operation $[~,~]$ and a trilinear operation $\{~, ~, ~\}$ on $L$ as follows.
$$ [e_1, e_2] = e_3,\qquad \{e_1, e_2, e_1\} = e_3.$$
All other brackets of the basis elements are either determined by the above definition or else zero. Then, $L$ with the above operations is a Lie-Yamaguti algebra.
\end{Exm}

See \cite{ABCO} for classification of some low dimensional nilpotent Lie-Yamaguti algebras.
  
\begin{Def}
Let $(L, [~,~], \{~,~,~\})$,  $(L^\prime, [~,~]^\prime, \{~,~,~\}^\prime)$ be two Lie-Yamaguti algebras. A homomorphism
$$ \phi :  (L, [~,~], \{~,~,~\}) \rightarrow (L^\prime, [~,~]^\prime, \{~,~,~\}^\prime)$$ of Lie-Yamaguti algebras is a $\mathbb K$-linear map $\phi :  L \rightarrow L^\prime$ satisfying
$$\phi ([x, y]) = [\phi (x), \phi (y)]^\prime,\qquad \phi (\{x, y, z\}) = \{\phi (x), \phi (y), \phi (z)\}^\prime$$ for all $x, y, z \in L.$
A homomorphism
$$ \phi :  (L, [~,~], \{~,~,~\}) \rightarrow (L^\prime, [~,~]^\prime, \{~,~,~\}^\prime)$$ of Lie-Yamaguti algebras is an isomorphism if there exists a homomorphism
$$\phi^\prime:  (L^\prime, [~,~]^\prime, \{~,~,~\}^\prime) \rightarrow (L, [~,~], \{~,~,~\})$$ such that $\phi^\prime \circ \phi = id_L$ and $\phi \circ \phi^\prime = id_{L^\prime}.$  The set of all self-isomorphisms of a Lie-Yamaguti algebra  $(L, [~,~], \{~,~,~\})$ is obviously a group under composition of maps and is denoted by ${Aut}_{LY}(L).$
\end{Def}

\noindent
{\bf Representation of a Lie-Yamaguti algebra:}
Let $L$ be a Lie-Yamaguti algebra over a field $\mathbb{K}$. We first recall the definition of representation of $L$ from \cite{KY}, \cite{KY-cohomology}. 

\begin{Def}\label{Representation of LYA}
Let $V$ be a vector space over $\mathbb{K}.$ Let $E(V)$ denote the Lie algebra of linear endomorphisms of $V$ with respect to the commutator bracket. A representation of $L$ on $V$ consists of a linear map $\rho:L \to E(V)$, and bilinear maps $D,\theta:L \times L \to E(V)$ such that the following relations hold:   
\begin{align}
&D(a,b) + \theta(a,b) - \theta(b,a) = [\rho(a),\rho(b)] - \rho([a,b]) \label{R1} \tag{R1}
\\
&\theta(a,[b,c]) - \rho(b) \theta(a,c) + \rho(c) \theta(a,b) = 0 \label{R2} \tag{R2} \\
&\theta([a,b],c) - \theta(a,c) \rho(b) + \theta(b,c) \rho(a) = 0
\label{R3} \tag{R3} \\
&\theta(c,d) \theta(a,b) - \theta(b,d) \theta(a,c) - \theta(a,\{b,c,d\}) + D(b,c) \theta(a,d) = 0 
\label{R4} \tag{R4} \\
&[D(a,b),\rho(c)] = \rho(\{a,b,c\})
\label{R5} \tag{R5} \\
&[D(a,b),\theta(c,d)] = \theta(\{a,b,c\},d) + \theta(c,\{a,b,d\}) \label{R6} \tag{R6}
\end{align}
\end{Def}
A representation as above will be denoted by $(V; \rho, D, \theta).$
\noindent
Taking $V=L$, there is a natural representation of $L$ on itself, called the adjoint representation of $L$, where the representation maps $\rho, D, \theta$ are given by 
\[\rho(a)(b) := [a,b], \quad D(a,b)(c) := \{a,b,c\}, \quad \theta(a,b)(c) :=  \{c,a,b\}.\]
 \noindent

The following additional relations can be deduced using relation (1), (2), (3), and (5) stated above. 
$$D([a,b],c) + D([b,c],a) + D([c,a],b) = 0.$$

\vspace{0.3cm}
\noindent
{\bf Cohomology groups of a Lie-Yamaguti algebra:}
K. Yamaguti introduced the following definition of cohomology groups of a Lie-Yamaguti algebra $L$ with coefficients in a representation  $(V; \rho, D, \theta).$
\smallskip 

For $p \ge 1$, let $C^{2p}(L;V)$ denote the space of all $2p$-linear maps  $f: L^{\otimes 2p} \to V$ such that for $i=1,2,\cdots,p$
\[f(x_1,\cdots,x_{2i-1},x_{2i},\cdots,x_{2p}) = 0\]

\medskip
\noindent
whenever  $x_{2i-1}=x_{2i}$.
Also let $C^{2p+1}(L;V)$ denote the space of all $(2p+1)$-linear maps $g: L^{\otimes 2p+1}\to V$ such that for $i=1,2,\cdots,p$
\[g(x_1,\cdots,x_{2i-1},x_{2i},\cdots,x_{2p+1})=0\] 
whenever $x_{2i-1}=x_{2i}.$ 

\medskip
\noindent
Let $C^1(L;V)$ denote the space of all linear maps from $L$ into $V.$ Consider a cochain complex defined as follows:
 
\[\begin{tikzcd}
	{C(L;V)} & {C^{(2, 3)}(L;V)} & {C^{(4, 5)}(L;V)} & \cdots \\
	& {C^{(3, 4)}(L;V) }
	\arrow["\delta", from=1-2, to=1-3]
	\arrow["\delta", from=1-3, to=1-4]
	\arrow["{\delta^*}", from=1-2, to=2-2]
	\arrow["{\delta}", from=1-1, to=1-2]
\end{tikzcd}\]

\medskip\noindent
Here, $C(L;V)$ is the subspace spaned by the diagonal elements $(f,f) \in C^1(L; V) \times C^1(L; V),$ $C^{(3, 4)}(L;V) = C^3(L; V) \times C^4(L; V)$ and for $p\geq 1,$
$$ C^{(2p, 2p+1)}(L; V) = C^{2p}(L;V) \times C^{2p+1}(L;V).$$ 

\medskip\noindent
The coboundary maps are defined as follows:  

\smallskip
For $p \ge 1$; $\delta:C^{(2p, 2p+1)}(L;V) \to C^{(2p+2, 2p+3)}(L;V)$ is of the form $\delta(f,g) = (\delta_If,\delta_{II}g)$
\begin{align*}
	\delta&_If(x_1,x_2,\ldots,x_{2p+2}) \\
	&=
	(-1)^p 
	\Big[\rho(x_{2p+1})g(x_1,\ldots,x_{2p},x_{2p+2}) - \rho(x_{2p+2})g(x_1,\ldots,x_{2p+1}) \\
	&\qquad \qquad\qquad\qquad\qquad\qquad\qquad\qquad\qquad\qquad\quad
	- g(x_1,\ldots,x_{2p},[x_{2p+1},x_{2p+2}])\Big] \\
	&\quad \quad
	+ \sum_{k=1}^p (-1)^{k+1} D(x_{2k-1},x_{2k}) f(x_1,\ldots,\hat{x}_{2k-1}, \hat{x}_{2k},\ldots,x_{2p+2}) \\
	&\quad \quad
	+ \sum_{k=1}^p \sum_{j=2k+1}^{2p+2} (-1)^k f(x_1,\ldots,\hat{x}_{2k-1}, \hat{x}_{2k},\ldots,\{x_{2k-1},x_{2k},x_j\}, \ldots, x_{2p+2}); 
\end{align*}
\begin{align*}	
	\delta&_{II}g(x_1,x_2,\ldots,x_{2p+3}) \\
	&=
	(-1)^p
	\Big[ \theta(x_{2p+2},x_{2p+3})g(x_1,\ldots,x_{2p+1}) - \theta(x_{2p+1},x_{2p+3})g(x_1,\ldots,x_{2p},x_{2p+2})\Big]	\\
	&\quad \quad
	+ \sum_{k=1}^{p+1} (-1)^{k+1} D(x_{2k-1},x_{2k})g(x_1,\ldots,\hat{x}_{2k-1},\hat{x}_{2k},\ldots,x_{2p+3}) \\
	&\quad \quad
	+ \sum_{k=1}^{p+1} \sum_{j=2k+1}^{2p+3} (-1)^k g(x_1,\ldots,\hat{x}_{2k-1},\hat{x}_{2k},\ldots, \{x_{2k-1},x_{2k},x_j\},\ldots,x_{2p+3}).
\end{align*}
\smallskip

\noindent
The map $\delta: C(L;V) \to C^{(2,3)}(L ; V)$ is given by
 $\delta(f,f) = (\delta_If,\delta_{II}f),$ where
 
\begin{equation*}
	\delta_If(a,b) = \rho(a)f(b) - \rho(b)f(a) - f([a,b]), 
\end{equation*}

\begin{equation*}
	\delta_{II}f(a,b,c) = \theta(b,c)f(a) - \theta(a,c)f(b) + D(a,b)f(c) - f(\{a,b,c\}). 
\end{equation*}

\vspace{0.5cm}\noindent
The map  $\delta^{\star}:C^{(2, 3)}(L; V) \to C^{(3, 4)}(L; V) $ is given by $\delta^{\star}(f,g) = (\delta^{\star}_If,\delta^{\star}_{II}g),$ where
\medskip
\begin{align*}
	\delta^{\star}_If(a,b,c) 
	&= -\rho(a)f(b,c) - \rho(b)f(c,a) - \rho(c)f(a,b) + f([a,b],c)  \\
	&\quad \quad
	+ f([b,c],a) + f([c,a],b) + g(a,b,c) + g(b,c,a) + g(c,a,b),
	\\ \\
	\delta^{\star}_{II}g(a,b,c,d) 
	&= \theta(a,d)f(b,c) + \theta(b,d)f(c,a) + \theta(c,d)f(a,b) + g([a,b],c,d)\\ 
	&\quad \quad
	+ g([b,c],a,d) + g([c,a],b,d).
\end{align*}

\medskip
\noindent
Following  \cite{KY-cohomology}, we have for each $f\in C^1(L; V)$
$$\delta_I\delta_I f = \delta^*_I\delta_If= 0 \quad\mbox{and}\quad\delta_{II}\delta_{II}f =  \delta^*_{II}\delta_{II}f= 0.$$
In general, for $(f, g) \in C^{(2p, 2p+1)}(L; V)$ 
$$(\delta \circ \delta)(f, g) = (\delta_I\circ\delta_I(f), \delta_{II}\circ \delta_{II}(g))  =0.$$

\smallskip
The cohomology groups of the Lie-Yamaguti algebra $L$ with coefficients in the representation $(V; \rho, D, \theta)$ is defined as follows.
\medskip
\begin{Def}\label{Definition Cohomology groups}
For $p \ge 2$, let 
$$Z^{(2p, 2p+1)}(L;V) :=span\{(f,g) \in C^{(2p, 2p+1)}(L;V) : \delta(f,g) = 0\}.$$ 

\medskip\noindent
An element of  $Z^{(2p, 2p+1)}(L;V)$ is called a $(2p, 2p+1)$-cocycle. Let 

$$B^{(2p, 2p+1)}(L; V) = \delta(C^{(2p-2, 2p-1)}(L; V)).$$ 

\medskip\noindent
An element of  $B^{(2p, 2p+1)}(L; V)$ is called  a $(2p, 2p+1)$-coboundary. For $p=1$, Let 

$$Z^2(L; V):= span\{f \in C^2(L; V):\delta_If = 0 = \delta^{\star}_If\},$$ 
and
$$Z^3(L;V) := span\{g \in C^3(L; V):\delta_{II}g = 0 = \delta^{\star}_{II}g\}$$ 

\medskip \noindent be the subspaces of $C^2(L; V)$ and $C^3(L; V),$ respectively. Denote by $Z^{(2, 3)}(L; V)$ the subspace $Z^2(L; V) \times Z^3(L;V)$ of $C^{(2, 3)}(L; V).$ 
Let 

$$B^{(2, 3)}(L; V):= B^2(L;V) \times B^3(L; V) = \{\delta (f,f): f \in C^1(L; V)\}.$$

\bigskip \noindent
For $p \ge 1$ the cohomology group $H^{(2p, 2p+1)}(L;V)$ of $L$ with coefficients in $V$ is defined as 

\[H^{(2p,2p+1)}(L; V) := \frac{Z^{(2p, 2p+1)}(L;V)}{B^{(2p, 2p+1)}(L; V)}.\]
\end{Def} 

\vspace{0.5cm}
\section{\Large Deformation of Lie-Yamaguti algebras}\label{$3$}
In this section, we develop one-parameter formal deformation theory of Lie-Yamaguti algebras over a field $\mathbb K$ of characteristic $0.$ We obtain the deformation cohomology that controls deformations in this context by slightly modifying the cohomology of K. Yamaguti as described in the previous section. 
\noindent
Let $K=\mathbb{K}[[t]]$ be the formal power series ring with coefficients in $\mathbb{K}$. Let $(L,[~,~],\{~,~,~\})$ be a Lie-Yamaguti algebra.  Let $L[[t]] := L \otimes_{\mathbb K}K.$ Then, $L[[t]]$ is a module over $K.$ Note that any $n$-linear map 
$$\phi : L \times \cdots \times L \to L$$ extends to an $n$-linear map
$$\phi_t : L[[t]] \times \cdots \times L[[t]] \to L[[t]]$$ over $K.$ Any such extension is said to be {\it ``defined over $K$"}.

\begin{Def}\label{deformation}
A one-parameter formal deformation of the Lie-Yamaguti algebra \\$(L,[~,~],\{~,~,~\})$ is given by a $K$-bilinear map 
$F_t:L[[t]]\times L[[t]]\to L[[t]]$ and a $K$-trilinear map $G_t:L[[t]]\times L[[t]] \times L[[t]] \to L[[t]]$ of the forms

$$F_t=\sum_{i\geq 0} F_it^i \quad \text{and} \quad  G_t=\sum_{i\geq 0}G_it^i,$$
such that
\smallskip
\begin{enumerate}
\item for all $i\geq 0$, $F_i:L\times L\to L$ is a $\mathbb{K}$-bilinear map defined over $K$, and $G_i:L\times L \times L\to L$ is a $\mathbb{K}$-trilinear map defined over $K$;
\smallskip
\item $F_0(~,~)=[~,~]$ is the bilinear bracket of $L$ and $G_0(~, ~, ~)=\{~, ~, ~\}$ is the trilinear bracket of $L$;
\smallskip
\item and $(L[[t]], F_t, G_t)$ is a Lie-Yamaguti algebra over $K.$
\end{enumerate}
{\bf Notation:} We will often use the notation $(F_t, G_t)$ to represent the deformation $(L[[t]], F_t, G_t). $
\end{Def}
\begin{Rem}\label{identity-deformation}
For any Lie-Yamaguti algebra $(L,[~,~],\{~,~,~\}),$ the bilinear bracket $[~,~]$ and the trilinear bracket $\{~,~,~\}$ extend linearly over $K$ giving us a Lie-Yamaguti algebr $L[[t]]$ over $K$ equipped with these extended brackets. This will be referred to as identity deformation. 
\end{Rem}

\smallskip
\noindent
Let $(L[[t]], F_t, G_t)$ be a deformation as above. Then, as $(L[[t]], F_t, G_t)$ is  a Lie-Yamaguti algebra over $K,$ the following equations must hold:

\begin{equation*}\label{deformation-equation}
	\begin{split}
		&F_t(a,a) = 0, \quad G_t(a,a,b) = 0, \\ \\
		&\sum_{\circlearrowleft (a,b,c)} \Big(F_t(F_t(a,b),c) + G_t(a,b,c)\Big) = 0,
		\qquad \sum_{\circlearrowleft (a,b,c)} G_t(F_t(a,b),c,d) = 0, \\ \\
		& G_t(a,b,F_t(c,d)) = F_t(G_t(a,b,c),d) + F_t(c,G_t(a,b,d)), \\ \\
		& G_t(a,b,G_t(c,d,e)) = G_t(g_t(a,b,c),d,e) + G_t(c,G_t(a,b,d),e) + G_t(c,d,G_t(a,b,e)). 
	\end{split}
\end{equation*}

\medskip\noindent
for all $a, b, c, d, e \in L.$
Here, $\sum_{\circlearrowleft (a,b,c)}$ denotes the cyclic sum over $a,b,~\text{and}~c$. Explicitly writing down the above equations, using the facts
\medskip
$$F_t=\sum_{i\geq 0} F_i t^i \quad \text{and} \quad  G_t=\sum_{i \geq 0}G_i t^i,$$ 
\medskip
we obtain the following identities 
\begin{align*}
&~\sum_{i\geq 0} F_{i}(a,a) t^{i} = 0 \\
&~\sum_{{i}\geq 0} G_{i}(a,a,b) t^{i} = 0 \\
&\sum_{\circlearrowleft(a,b,c)} ~\sum_{i,j \ge 0}   \Big(F_i(F_j(a,b),c)+G_{i+j}(a,b,c)\Big)t^{i+j} = 0, \\
&\sum_{\circlearrowleft (a,b,c)}~\sum_{i,j \ge 0}  \Big(G_i(F_j(a,b),c,d)\Big) t^{i+j} = 0, \\
&~\sum_{i,j \geq 0}  
G_i(a,b,F_j(c,d)) t^{i+j} = \sum_{i,j \geq 0} \Big(F_i(G_j(a,b,c),d) + F_i(c,G_j(a,b,d))\Big) t^{i+j}, \\
&~\sum_{i,j \geq 0}  
G_i(a,b,G_j(c,d,e)) t^{i+j} = \sum_{i,j \ge 0} \Big(G_i(G_j(a,b,c),d,e) + G_i(c,G_j(a,b,d),e) \\
& \qquad\qquad\qquad\qquad\qquad\qquad\qquad\qquad\qquad\qquad\qquad\qquad\qquad
+ G_i(c,d,G_j(a,b,e))\Big) t^{i+j}.
\end{align*}

\medskip
Collecting the coefficients of $t^\nu$ for $\nu \geq 0,$ we see that the above equations are equivalent to the following system of equations.

\newpage
\noindent
{\bf Deformation equations for the Lie-Yamaguti algebra $L$:} 

\medskip
\noindent
For $\nu \geq 0$, and for all $a,b,c,d,e \in L$;  
\begin{align}
    &\quad F_{\nu}(a,a) = 0 \label{de1}\\
    &\quad G_{\nu}(a,a,b) = 0, \label{de2}\\
	&\sum_{\substack{i+j=\nu \\ i,j \ge 0}}  \sum_{\circlearrowleft  (a,b,c)} \Big(F_i(F_j(a,b),c)+G_\nu(a,b,c)\Big) = 0, \label{de3}\\
	&\sum_{\substack{i+j=\nu \\ i,j \ge 0}}   \sum_{\circlearrowleft (a,b,c)} \Big(G_i(F_j(a,b),c,d)\Big) = 0, \label{de4}\\
	&\sum_{\substack{i+j=\nu \\ i,j \ge 0}}  
	\Big(G_i(a,b,F_j(c,d)) - F_i(G_j(a,b,c),d) - F_i(c,G_j(a,b,d))\Big)
	= 0, \label{de5}\\
	&\sum_{\substack{i+j=\nu \\ i,j \ge 0}}  
	\Big(G_i(a,b,G_j(c,d,e)) -  G_i(G_j(a,b,c),d,e) - G_i(c,G_j(a,b,d),e) \label{de6} \\
	&\qquad\qquad\qquad\qquad\qquad\qquad\qquad\qquad\qquad\qquad\qquad\quad
	- G_i(c,d,G_j(a,b,e))\Big) = 0. \nonumber
\end{align}

Observe that for $\nu = 0$ equations $(\ref{de1}) - (\ref{de6})$ reduces to the axioms of Lie-Yamaguti algebra $(\ref{LY1})-(\ref{LY6})$. Moreover, the equations $(\ref{de1})-(\ref{de2})$ implies that for each $\nu \geq 0,$ $(F_\nu, G_\nu) \in C^{(2,3)}(L;L).$

\begin{Def}
Let $(L[[t]], F_t, G_t)$ be a deformation as above. The infinitesimal of this formal deformation is the $(2,3)$-cochain $(F_1, G_1).$
More generally, if $F_i = 0 =G_i,~1\leq i \leq n-1,$ and the $(2,3)$-cochain $(F_n, G_n)$ is non-zero, then the $(2,3)$-cochain $(F_n, G_n)$ is called the $n$-infinitesimal of the given deformation.
\end{Def}

Thus, infinitesimal of a deformation is simply the $1$-infinitesimal. The following result relates deformation of a Lie-Yamaguti algebra $L$ with its cohomology $H^{\star}(L; L).$

\begin{Prop} \label{n-infinitesimal-cocycle}
The $n$-infinitesimal of a one-parameter formal deformation $(F_t,G_t)$ of the Lie-Yamaguti algebra $L$ is a $(2,3)$-cocycle.
\end{Prop}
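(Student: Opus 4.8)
The plan is to extract the $n$-infinitesimal from the deformation equations \eqref{de3}--\eqref{de6} at order $\nu=n$, and to recognize the resulting linear constraints on $(F_n,G_n)$ as precisely the cocycle conditions $\delta_I F_n = 0$, $\delta^{\star}_I F_n = 0$, $\delta_{II} G_n = 0$, $\delta^{\star}_{II} G_n = 0$ spelled out in Section~\ref{$2$}. Since $F_i=0=G_i$ for $1\le i\le n-1$, in each sum $\sum_{i+j=\nu}$ the only surviving cross terms are those with $\{i,j\}=\{0,n\}$ (together with the $i=j=0$ term, which vanishes by the Lie-Yamaguti axioms for the undeformed structure). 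Thus at $\nu=n$ each deformation equation collapses to an expression that is \emph{linear} in $(F_n,G_n)$, with the remaining operations being the original brackets $[~,~]=F_0$ and $\{~,~,~\}=G_0$ — i.e. the adjoint representation maps $\rho,D,\theta$.

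Concretely, I would proceed equation by equation. Equations \eqref{de1}--\eqref{de2} at $\nu=n$ give $F_n(a,a)=0$ and $G_n(a,a,b)=0$, so $(F_n,G_n)\in C^{(2,3)}(L;L)$ (already observed in the text). From \eqref{de3} at $\nu=n$, the $i=j=0$ summand is the Lie-Yamaguti identity \eqref{LY3} hence zero, leaving $\sum_{\circlearrowleft}\big(F_0(F_n(a,b),c)+F_n(F_0(a,b),c)+G_n(a,b,c)\big)=0$; rewriting $F_0(F_n(a,b),c)=[F_n(a,b),c]=\rho(c)F_n(a,b)$ and $F_n(F_0(a,b),c)=F_n([a,b],c)$, and cyclically symmetrizing, this is exactly $\delta^{\star}_I F_n(a,b,c)=0$ up to sign. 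Similarly \eqref{de4} at $\nu=n$, after discarding the vanishing $i=j=0$ term \eqref{LY4}, yields $\sum_{\circlearrowleft}\big(G_0(F_n(a,b),c,d)+G_n(F_0(a,b),c,d)\big)=0$, i.e. $\sum_{\circlearrowleft}\big(\theta(c,d)F_n(a,b)+G_n([a,b],c,d)\big)=0$, which matches $\delta^{\star}_{II}G_n(a,b,c,d)=0$. Equation \eqref{de5} at $\nu=n$ reduces to $G_0(a,b,F_n(c,d))+G_n(a,b,F_0(c,d))-F_0(G_n(a,b,c),d)-F_n(G_0(a,b,c),d)-F_0(c,G_n(a,b,d))-F_n(c,G_0(a,b,d))=0$; interpreting $G_0(a,b,-)=D(a,b)$ and $F_0(-,d)=\rho(d)$ etc., and comparing with the definitions of $\delta_I F_n$ and $\delta_{II}G_n$, this is the $p=1$ case of $\delta_I(F_n)=0$. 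Finally \eqref{de6} at $\nu=n$, after removing the undeformed identity \eqref{LY6} (the $i=j=0$ term) and rewriting the surviving terms via $D,\theta$, gives $\delta_{II}(G_n)=0$.

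The main work is bookkeeping: carefully matching the cyclic sums and index conventions of the deformation equations against the somewhat asymmetric-looking formulas for $\delta_I,\delta_{II},\delta^{\star}_I,\delta^{\star}_{II}$ in Section~\ref{$2$}, and tracking the signs $(-1)^p$ with $p=1$. I expect the principal obstacle to be verifying that \eqref{de5} and \eqref{de6} together encode exactly $\delta(F_n,G_n)=0$ in $C^{(4,5)}(L;L)$ (not merely the individual vanishing of $\delta_I F_n$ or $\delta_{II} G_n$) — one must check that the $D$-terms and $\theta$-terms land in the right slots, and that there are no leftover terms. Once each of the four identities $\delta_I F_n=0$, $\delta^{\star}_I F_n=0$, $\delta_{II}G_n=0$, $\delta^{\star}_{II}G_n=0$ is verified, Definition~\ref{Definition Cohomology groups} gives $(F_n,G_n)\in Z^2(L;L)\times Z^3(L;L)=Z^{(2,3)}(L;L)$, completing the proof.
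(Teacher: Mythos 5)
Your proposal is correct and follows essentially the same route as the paper: one matches each of the four deformation equations \eqref{de3}--\eqref{de6} at $\nu=n$ (where only the $(i,j)=(0,n)$ and $(n,0)$ terms survive, by the definition of the $n$-infinitesimal) against the formulas for $\delta^{\star}_I F_n$, $\delta^{\star}_{II}G_n$, $\delta_I F_n$, $\delta_{II}G_n$ written via the adjoint maps $\rho,D,\theta$. The only cosmetic slip is your repeated reference to an ``$i=j=0$ summand'' to be discarded: since the sum runs over $i+j=n$ with $n\ge 1$, no such summand occurs, and the expressions you are left with are the correct ones regardless.
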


\begin{proof}
In order to show that the $n$-infinitesimal $(F_n,G_n)$ is a $(2,3)$-cocycle, we need to verify $(\delta_I F_n, \delta_{II}G_n) = (0,0)$ and $(\delta_I^\star F_n,\delta_{II}^\star G_n)=(0,0)$.
 
\begin{align*}
\delta_IF_n (a,b,c,d) 
~&=~(-1)^1\Big[\rho(c)G_n(a,b,d) - \rho(d)G_n(a,b,c) - G_n(a,b,[c,d])\Big] + D(a,b)F_n(c,d) \\
~&\qquad 
- F_n(\{a,b,c\},d) - F_n(c,\{a,b,d\}) \\
~&=~ -[c,G_n(a,b,d)] + [d,G_n(a,b,c)] + G_n(a,b,[c,d]) + \{a,b,F_n(c,d)\} \\
~&\qquad
- F_n(\{a,b,c\},d) - F_n(c,\{a,b,d\})\\
~&=~ 0, \qquad(\text{from equation (\ref{de5}) with}~ \nu=n).  
\end{align*}

\begin{align*}
\delta_{II}G_n (a,b,c,d,e)
~&= ~(-1)^1\Big[\theta(d,e)G_n(a,b,c) - \theta(c,e)G_n(a,b,d)\Big] 
+ D(a,b) G_n(c,d,e) \\
~&\qquad
- D(c,d) G_n(a,b,e) - G_n(\{a,b,c\},d,e) 
- G_n(c,\{a,b,d\},e) \\
~& \qquad
- G_n(c,d,\{a,b,e\}) + G_n(a,b,\{c,d,e\})\\
~&=~
- \{G_n(a,b,c),d,e\} - \{c,G_n(a,b,d),e\} 
- \{c,d,G_n(a,b,e)\}  \\
~&\qquad
+ \{a,b,G_n(c,d,e)\} - G_n(\{a,b,c\},d,e) - G_n(c,\{a,b,d\},e) \\
~&\qquad
- G_n(c,d,\{a,b,e\}) + G_n(a,b,\{c,d,e\})\\
~&=~ 0, \qquad (\text{from equation (\ref{de6}) with}~ \nu=n). 
\end{align*}
Similarly by using equation (\ref{de3})  and (\ref{de4}) with $\nu=n$, one can get $\delta_I^\star F_n = 0$ and $\delta_{II}^\star G_n= 0,$ respectively. 
\end{proof}

Next we address the question of extending an infinitesimal deformation to a full-blown deformation. In view of the Proposition \ref{n-infinitesimal-cocycle}, one might be interested to know if every $(2, 3)$-cocycle in $Z^{(2, 3)}(L; L)$ may be realized as the infinitesimal of a formal one-parameter deformation of $L.$ This gives rise to the concept of integrability and to decide integrability of a $(2, 3)$-cocycle, one is led into an obstruction theory, which we will consider in the next section. Before we introduce the definition of integrability of a $(2, 3)$-cocycle, we make the following observations.

\medskip

Let $L$ be a Lie-Yamaguti algebra. Then for any $(2,3)$-cochain $(F_{\nu},G_{\nu}) \in C^{(2,3)}(L;L)$, we have 

\begin{align*}
\delta_I^{\star}F_{\nu}(a,b,c) ~&= \sum_{\substack{i+j=\nu \\ i = 0,\text{or}~ j = 0}}  ~\sum_{\circlearrowleft  (a,b,c)} F_i(F_j(a,b),c)
+\sum_{\circlearrowleft  (a,b,c)} G_{\nu}(a,b,c), \\ \\
\delta_{II}^{\star}G_{\nu} (a,b,c,d) ~&=
\sum_{\substack{i+j=\nu \\i = 0,\text{or}~ j = 0}}   ~\sum_{\circlearrowleft (a,b,c)} G_i(F_j(a,b),c,d), \\ \\ 
\delta_I F_{\nu}(a,b,c,d) ~&=  
\sum_{\substack{i+j=\nu \\ i = 0,\text{or}~ j = 0}}  
\Big(G_i(a,b,F_j(c,d)) - F_i(G_j(a,b,c),d) - F_i(c,G_j(a,b,d))\Big), \\ \\ 
\delta_{II}G_{\nu}(a,b,c,d,e) ~&= \sum_{\substack{i+j=\nu \\ i = 0,\text{or}~ j = 0}}  
\Big(G_i(a,b,G_j(c,d,e)) -  G_i(G_j(a,b,c),d,e) - G_i(c,G_j(a,b,d),e) \\
&\qquad\qquad\qquad\qquad\qquad\qquad\qquad\qquad\qquad\qquad\qquad\qquad
- G_i(c,d,G_j(a,b,e))\Big).
\end{align*}

\bigskip\noindent
Using the above four expressions, equations (\ref{de1})-(\ref{de6}) can be rewritten as

\begin{align}
&\quad F_{\nu}(a,a) ~=~ 0, \label{DE1}\\ 
&\quad G_{\nu}(a,a,b) ~=~ 0, \label{DE2}\\ 
&\sum_{\substack{i+j=\nu \\ i,j > 0}}  ~\sum_{\circlearrowleft  (a,b,c)} F_i(F_j(a,b),c)
~=~ -\delta_I^{\star} F_{\nu}, \label{DE3}\\
&\sum_{\substack{i+j=\nu \\ i,j > 0}}   ~\sum_{\circlearrowleft (a,b,c)} G_i(F_j(a,b),c,d) ~=~ - \delta_{II}^{\star} G_{\nu}, \label{DE4} \\
&\sum_{\substack{i+j=\nu \\ i,j > 0}}  
\Big(G_i(a,b,F_j(c,d)) - F_i(G_j(a,b,c),d) - F_i(c,G_j(a,b,d))\Big) ~=~ -\delta_IF_{\nu}, \label{DE5}\\
&\sum_{\substack{i+j=\nu \\ i,j > 0}}  
\Big(G_i(a,b,G_j(c,d,e)) -  G_i(G_j(a,b,c),d,e) - G_i(c,G_j(a,b,d),e) \label{DE6}\\
&\qquad\qquad\qquad\qquad\qquad\qquad\qquad\qquad\qquad\qquad\quad
- G_i(c,d,G_j(a,b,e))\Big) ~=~ -\delta_{II}G_{\nu}. \nonumber
\end{align}
for all $a,b,c,d,e \in L$.

\begin{Def}
Let $L$ be a Lie-Yamaguti algebra. A $(2,3)$-cocycle is said to be integrable if it is the infinitesimal of a formal one-parameter deformation of $L$.
\end{Def} 

Thus, a $(2,3)$-cocycle $(F_1,G_1)$ is integrable if it can be extended to a sequence of $(2,3)$-cochains, $(F_0,G_0),(F_1,G_1),(F_2,G_2),(F_3,G_3)$, $\ldots$, $(F_{\nu},G_{\nu}),\ldots,$  with $F_0 = [~,~]$ and $G_0 = \{~, ~, ~\}$  so that $L[[t]]$ with 
$$F_t=\sum_{i\geq 0} F_it^i \quad \text{and} \quad  G_t=\sum_{i\geq 0}G_it^i,$$ becomes a formal one-parameter deformations of $L,$ and hence satisfies equations $(\ref{DE3})-(\ref{DE6})$. 

\vspace{0.5cm}
\section{\Large Obstruction cochains and Integrability}\label{$4$}
The obstruction theory plays an important role in deciding whether a given $(2,3)$-cocycle is integrable or not. The aim of this section is to address the question of integrability of a given $(2,3)$-cocycle in $Z^{(2, 3)}(L; L).$ We show in this section that it is necessary to modify the cochain complex of Yamaguti as introduced in Section \ref{$2$} to obtain the right deformation cohomology in the context of deformation of Lie-Yamaguti algebras. We show that there exists a sequence of cohomology classes, vanishing of which is a necessary and sufficient condition for a given $(2,3)$-cocycle to be integrable. We proceed to define obstruction cochains in the present context.

\medskip

Suppose we start with a $n^{th}$ order deformation $(n \geq 1)$, that is, we are given $(2,3)$-cochains $(F_{\nu},G_{\nu})$ such that it satisfy equations $(\ref{DE3})-(\ref{DE6})$ for $1\leq \nu \leq n$. In order to extend the given $n^{th}$ order deformation to an $(n+1)^{th}$ order deformation we must have a  cochain $(F_{n+1}, G_{n+1}) \in C^{(2, 3)}(L; L)$ such that the equations $(\ref{DE3})-(\ref{DE6})$ hold for $1\leq \nu \leq n+1.$ In other words, for $a, b, c, d, e \in L,$ the following equations should hold.

\begin{align*}
&\sum_{\substack{i+j=n+1 \\ i,j > 0}}  ~\sum_{\circlearrowleft  (a,b,c)} F_i(F_j(a,b),c)
 ~=~ -\delta_I^{\star} F_{n+1}(a,b,c),\\
&\sum_{\substack{i+j=n+1 \\ i,j > 0}}   ~\sum_{\circlearrowleft (a,b,c)} G_i(F_j(a,b),c,d) ~=~ - \delta_{II}^{\star} G_{n+1}(a,b,c,d),\\
&\sum_{\substack{i+j=n+1 \\ i,j > 0}}  
\Big(G_i(a,b,F_j(c,d)) - F_i(G_j(a,b,c),d) - F_i(c,G_j(a,b,d))\Big) ~=~ -\delta_IF_{n+1}(a,b,c,d), \\
&\sum_{\substack{i+j=n+1 \\ i,j > 0}}  
\Big(G_i(a,b,G_j(c,d,e)) -  G_i(G_j(a,b,c),d,e) - G_i(c,G_j(a,b,d),e)\\
&\qquad\qquad\qquad\qquad\qquad\qquad\qquad\qquad\qquad\qquad
- G_i(c,d,G_j(a,b,e))\Big) ~=~ -\delta_{II}G_{n+1}(a,b,c,d,e). 
\end{align*}

\bigskip\noindent
Let $a, b, c, d, e \in L.$ Define  $P, Q, R,~ \text{and}~ S$ as follows

\begin{align}
P (a,b,c) & = \sum_{\substack{i+j=n+1 \\ i,j \geq 1}} \sum_{\circlearrowleft (a,b,c)} F_i(F_{j}(a,b),c),\label{O1}\\
Q (a,b,c,d) & = \sum_{\substack{i+j=n+1 \\ i,j \geq 1}} \sum_{\circlearrowleft (a,b,c)} G_i(F_{j}(a,b),c,d),\label{O2}\\
R (a,b,c,d) & = \sum_{\substack{i+j=n+1 \\ i,j \geq 1}} \Big(G_i(a,b,F_{j}(c,d)) - F_i(G_{j}(a,b,c),d) - F_i(c,G_{j}(a,b,d))\Big), \label{O3}\\
S (a,b,c,d,e) & = \sum_{\substack{i+j=n+1 \\ i,j \ge 1}} \label{O4}
\Big(G_i(a,b,G_{j}(c,d,e)) - G_i(G_{j}(a,b,c),d,e)\\
&\qquad\qquad\qquad\qquad\qquad\quad
- G_i(c,G_{j}(a,b,d),e) - G_i(c,d,G_{j}(a,b,e)) \Big). 	\nonumber
\end{align}

\bigskip\noindent
Note that $(P,Q) \in C^{(3,4)}(L;L)$ and $(R,S) \in C^{(4,5)}(L;L)$.  We next observe that $(R, S)$ is a $(4,5)$-cocycle.

\begin{Lem} \label{obstruction_cocycle}
Let $L$ be a Lie-Yamaguti algebra and $(F_1,G_1),(F_2,G_2),\ldots,(F_n,G_n)$ define an $n^{th}$ order deformation of $L$. Then $(R,S)$ defined in (\ref{O3}) and (\ref{O4}) is a $(4,5)$-cocycle. 
\end{Lem}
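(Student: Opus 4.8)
The plan is to show that $\delta(R,S)=(\delta_I R,\delta_{II}S)=(0,0)$ by exhibiting $(R,S)$ as (essentially) $-\delta(P,Q)$ plus a correction built from the lower-order deformation equations, so that the cocycle identity $\delta\circ\delta=0$ does the work. Concretely, I would first expand $\delta_I R(a,b,c,d,e)$ and $\delta_{II}S$ directly from the definitions of the coboundary maps on $C^{(4,5)}(L;L)$, substituting the explicit formulas (\ref{O3}) and (\ref{O4}) for $R$ and $S$. This produces a large sum of terms, each a product $F_i(F_j(\cdots))$, $G_i(G_j(\cdots))$, $G_i(F_j(\cdots))$, etc., with $i+j=n+1$ and $i,j\ge1$; I would organize these by the ``shape'' of the nesting. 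The key bookkeeping device is that whenever the outer and inner indices satisfy $i+j=n+1$ with $i,j\ge1$, one may \emph{add and subtract} the boundary terms where $i=0$ or $j=0$: the $i=0$ or $j=0$ contributions reassemble, via the four displayed identities just before Definition (the ones expressing $\delta_I^\star F_\nu$, $\delta_{II}^\star G_\nu$, $\delta_I F_\nu$, $\delta_{II}G_\nu$ in terms of boundary-index sums), into genuine coboundary expressions applied to the cochains $(F_\nu,G_\nu)$ for $\nu\le n$.

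The main structural step is the identity
\[
(R,S) \;=\; -\,\delta(P,Q) \;+\;(\text{terms that vanish because }(F_\nu,G_\nu)\text{ satisfy (\ref{DE3})--(\ref{DE6}) for }\nu\le n).
\]
To justify it I would group the terms of $\delta_I R$ and $\delta_{II}S$ into three classes: (i) terms in which a product of two $F$'s or two $G$'s (with positive indices summing to $n+1$) is hit by an outer $\rho$, $\theta$, $D$, or by a structure-constant substitution inside an argument — these are exactly the terms of $-\delta_I P$, $-\delta_{II}Q$; (ii) ``mixed'' terms where the Yamaguti relations (LY3)--(LY6) for the \emph{undeformed} bracket $(F_0,G_0)$ are used to collapse triple nestings — these produce the coboundary pieces $\delta_I F_\nu$ etc.\ for intermediate $\nu$; (iii) genuinely leftover terms, which I claim cancel in pairs by the cyclic-sum symmetry and the antisymmetry relations (\ref{DE1})--(\ref{DE2}), (\ref{LY1})--(\ref{LY2}). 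Once (R,S) is written as $-\delta(P,Q)$ modulo the lower-order deformation equations, applying $\delta$ gives $\delta(R,S) = -\delta\delta(P,Q) + \delta(\text{lower-order}) = 0$, using $\delta\circ\delta=0$ (stated in the excerpt) and the fact that the lower-order correction is itself a sum of coboundaries, hence annihilated by $\delta$. Finally I would separately check the ``$\star$'' components $\delta_I^\star R$ and $\delta_{II}^\star S$ vanish — but since $R\in C^4$ and $S\in C^5$ sit in $C^{(4,5)}$ where the only coboundary out is $\delta$, there is nothing extra to verify beyond $\delta(R,S)=0$; I would note this and also record the compatibility $\delta^\star_I\delta_I=0=\delta^\star_{II}\delta_{II}$ in case the reader wants the full cocycle condition spelled out.

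The hard part will be item (ii): correctly tracking how the associativity-type Yamaguti axioms for the base bracket interact with the deformed pieces, because each of (LY3)--(LY6) contributes several terms to several of $P,Q,R,S$ simultaneously, and the indices $i,j$ must be chased carefully to see that the ``$i=0$ or $j=0$'' boundary strata glue correctly onto the positive-index strata. A clean way to manage this is to prove the identity at the level of the \emph{formal} Lie--Yamaguti structure: introduce the truncated brackets $\bar F_t=\sum_{i=0}^{n} F_i t^i$, $\bar G_t=\sum_{i=0}^n G_i t^i$, observe that the obstruction $(R,S)$ is precisely the coefficient of $t^{n+1}$ in the ``curvature'' of $(\bar F_t,\bar G_t)$ (i.e.\ in the failure of the Lie--Yamaguti axioms), and then invoke the graded Jacobi/Leibniz identities of the governing (Yamaguti) structure to conclude the curvature is itself a cocycle modulo the vanishing of lower curvatures. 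This is the Gerstenhaber-style argument; it packages all the index bookkeeping into one application of $\delta\circ\delta=0$ and is, I expect, how the authors proceed. I would therefore set up that formal framework first and only then unwind it into the explicit term-by-term verification if a self-contained computational proof is desired.
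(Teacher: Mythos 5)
There is a genuine gap, and it sits in your ``main structural step.'' You propose to write $(R,S)=-\delta(P,Q)+(\text{terms vanishing by (\ref{DE3})--(\ref{DE6})})$ and then conclude by $\delta\circ\delta=0$. But no differential $C^{(3,4)}(L;L)\to C^{(4,5)}(L;L)$ exists in this theory: the Yamaguti coboundary $\delta$ raises the bidegree by $(2,2)$, the only map landing in $C^{(3,4)}$ is $\delta^{\star}$ (whose source is $C^{(2,3)}$), and in the deformation complex of Definition \ref{deformation-cohomology} the arrow leaving the $C^{(3,4)}$-factor is explicitly the zero map. So $\delta(P,Q)$ is undefined, and the identity you want to prove does not typecheck. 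It is also wrong in substance: $(P,Q)$ records the failure of the cyclic axioms (LY3)--(LY4) at order $n+1$ while $(R,S)$ records the failure of (LY5)--(LY6); these are independent components of the obstruction class $\bigl((P,Q),(R,S)\bigr)$, and if $(R,S)$ were expressible through $(P,Q)$ modulo terms that genuinely vanish, the $(4,5)$-part of the obstruction would be systematically controlled by the $(3,4)$-part, which Theorem \ref{obstruction_vanish} does not permit. Your closing remark that the cocycle condition in bidegree $(4,5)$ reduces to $\delta(R,S)=0$ is correct, but the route you commit to for proving it fails at the first step.

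Your fallback paragraph (the Gerstenhaber-style ``coefficient of $t^{n+1}$ in the curvature of the truncated structure'' argument) does point at the paper's actual strategy, but it defers exactly the content that constitutes the proof. The paper does not invoke $\delta\circ\delta=0$ at all here. Instead it (i) defines a circle product $\circ=(\circ_1,\circ_2)$ in the specific bidegrees $(2,3)\times(2,3)$, $(2,3)\times(4,5)$ and $(4,5)\times(2,3)$, and writes $R=\sum_{i+j=n+1}(F_i,G_i)\circ_1(F_j,G_j)$, $S=\sum_{i+j=n+1}(F_i,G_i)\circ_2(F_j,G_j)$; (ii) proves the key computational lemma that $\delta$ is a derivation of $\circ$ \emph{up to explicit quadratic correction terms} involving brackets of the $G_i,G_j$; (iii) observes that these correction terms are antisymmetric under $i\leftrightarrow j$ and hence cancel in the sum over $i+j=n+1$; and (iv) uses the deformation equations (\ref{DE5})--(\ref{DE6}) for $\nu\le n$ to replace $\delta(F_i,G_i)$ by sums of circle products, reducing $\delta(R,S)$ to associator-type sums $\bigl((F_i,G_i)\circ(F_j,G_j)\bigr)\circ(F_k,G_k)+(F_i,G_i)\circ\bigl((F_j,G_j)\circ(F_k,G_k)\bigr)$ over $i+j+k=n+1$, which are then shown to cancel by an explicit symmetry computation. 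Steps (ii) and (iv) are the hard part and nothing in your proposal produces them; ``one application of $\delta\circ\delta=0$'' does not package this bookkeeping, because the obstruction cochain is quadratic in the $(F_\nu,G_\nu)$ and its coboundary is controlled by a derivation identity for $\circ$, not by the chain-complex identity.
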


\noindent
In order to prove this, we first define a circle product as follows.

\bigskip\noindent
The circle product

\[\circ = (\circ_1,\circ_2) : C^{(2k,2k+1)}(L; L) \times C^{(2l,2l+1)}(L; L) \longrightarrow C^{(2k+2l,2k+2l+1)}(L; L),\]

\medskip\noindent
denoted by 
\bigskip
\[(f_1,g_1) \circ (f_2,g_2) = \Big((f_1,g_1)\circ_1 (f_2,g_2) ~,~ (f_1,g_1)\circ_2(f_2,g_2)\Big)\]

\bigskip\noindent
is defined for the following values of $k$ and $l$: $k=1,l=1;~ k=1,l=2; ~\text{and}~k=2, l=1.$ Let $a_i \in L,~1\leq i \leq 7.$

\bigskip\noindent
Let $k=1$ and $l=1.$ Then, for $(f_1,g_1),~(f_2,g_2)\in C^{(2,3)}(L; L)$ define
\bigskip
\begin{align*}
& (f_1,g_1) \circ_1 (f_2,g_2) (a_1,a_2,a_3,a_4) \\
&\qquad =  g_1(a_1,a_2,f_2(a_3,a_4)) - f_1(g_2(a_1,a_2,a_3),a_4) - f_1(a_3,g_2(a_1,a_2,a_4)),\\ \\
& (f_1,g_1) \circ_2 (f_2,g_2) (a_1,a_2,a_3,a_4,a_5) \\
&\qquad = g_1(a_1,a_2,g_2(a_3,a_4,a_5)) - g_1(g_2(a_1,a_2,a_3),a_4,a_5) - g_1(a_3,g_2(a_1,a_2,a_4),a_5)\\
& \qquad\qquad  - g_1(a_3,a_4,g_2(a_1,a_2,a_5)). 	
\end{align*}

\bigskip\noindent
Let $k=1$ and $l=2.$ For $(f_1,g_1) \in C^{(2,3)}(L;L)$ and $(f_2,g_2) \in C^{(4,5)}(L;L)$  define
\bigskip
\begin{align*}
& (f_1,g_1) \circ_1 (f_2,g_2) (a_1,a_2,a_3,a_4,a_5,a_6) \\
&\qquad = -f_1(g_2(a_1,a_2,a_3,a_4,a_5),a_6) - f_1(a_5,g_2(a_1,a_2,a_3,a_4,a_6)) \\ 
& \qquad\qquad+ g_1(a_3,a_4,f_2(a_1,a_2,a_5,a_6)) - g_1(a_1,a_2,f_2(a_3,a_4,a_5,a_6)), \\ \\
& (f_1,g_1) \circ_2 (f_2,g_2) (a_1, a_2,a_3,a_4,a_5,a_6,a_7) \\
& \qquad= -g_1(g_2(a_1,a_2,a_3,a_4,a_5),a_6,a_7)  - g_1(a_5,g_2(a_1,a_2,a_3,a_4,a_6),a_7) \\
& \qquad\qquad- g_1(a_5,a_6,g_2(a_1,a_2,a_3,a_4,a_7)) + g_1(a_3,a_4,g_2(a_1,a_2,a_5,a_6,a_7)) \\
& \qquad\qquad
- g_1(a_1,a_2,g_2(a_3,a_4,a_5,a_6,a_7)).
\end{align*}

\bigskip\noindent
Let $k=2$ and $l=1.$ Then, for $(f_1,g_1) \in C^{(4,5)}(L;L)$ and $(f_2,g_2) \in C^{(2,3)}(L;L)$ define
\bigskip
\begin{align*}
& (f_1,g_1) \circ_1 (f_2,g_2) (a_1,a_2,a_3, a_4,a_5,a_6) \\
&\qquad = f_1(g_2(a_1,a_2,a_3),a_4,a_5,a_6)  + f_1(a_3,g_2(a_1,a_2,a_4),a_5,a_6) \\
& \qquad\qquad+ f_1(a_3,a_4,g_2(a_1,a_2,a_5),a_6) +f_1(a_3,a_4,a_5,g_2(a_1,a_2,a_6)) \\
& \qquad\qquad- f_1(a_1,a_2,g_2(a_3,a_4,a_5),a_6) - f_1(a_1,a_2,a_5,g_2(a_3,a_4,a_6)) \\
& \qquad\qquad+g_1(a_1,a_2,a_3,a_4,f_2(a_5,a_6)),	
\end{align*}

\begin{align*}
& (f_1,g_1) \circ_2 (f_2,g_2) (a_1,a_2,a_3,a_4,a_5,a_6,a_7) \\
& \qquad = g_1(g_2(a_1,a_2,a_3),a_4,a_5,a_6,a_7) + g_1(a_3,g_2(a_1,a_2,a_4),a_5,a_6,a_7) \\
& \qquad\qquad + g_1(a_3,a_4,g_2(a_1,a_2,a_5),a_6,a_7) + g_1(a_3,a_4,a_5,g_2(a_1,a_2,a_6),a_7) \\
& \qquad\qquad + g_1(a_3,a_4,a_5,a_6,g_2(a_1,a_2,a_7)) - g_1(a_1,a_2,g_2(a_3,a_4,a_5),a_6,a_7) \\
& \qquad\qquad - g_1(a_1,a_2,a_5,g_2(a_3,a_4,a_6),a_7) - g_1(a_1,a_2,a_5,a_6,g_2(a_3,a_4,a_7)) \\
& \qquad\qquad + g_1(a_1,a_2,a_3,a_4,g_2(a_5,a_6,a_7)).
\end{align*}

\noindent
Now observe that the cochain $(R,S) \in C^{(4,5)}(L;L)$ may be expressed in terms of the circle product defined above as follows. 
\begin{align*}
	R (a_1,&a_2,a_3,a_4) \\
	&= \sum_{\substack{i+j=n+1 \\ i,j \ge 1}} 
	\Big(G_i(a_1,a_2,F_{j}(a_3,a_4)) - F_i(G_{j}(a_1,a_2,a_3),a_4) 
	- F_i(a_3,G_{j}(a_1,a_2,a_4))\Big) \\
	&= \sum_{\substack{i+j=n+1 \\ i,j \ge 1}}
	(F_i,G_i) \circ_1 (F_j,G_j) (a_1,a_2,a_3,a_4).	\\ \\
    S (a_1,&a_2,a_3,a_4,a_5) \\
	&=
	\sum_{\substack{i+j=n+1 \\ i,j \ge 1}}
	\Big(G_i(a_1,a_2,G_j(a_3,a_4,a_5)) 
	- G_i(G_j(a_1,a_2,a_3),a_4,a_5) \\
	&\qquad\qquad\qquad\qquad\qquad
	- G_i(a_3,G_j(a_1,a_2,a_4),a_5) 
	- G_i(a_3,a_4,G_j(a_1,a_2,a_5))\Big) \\ \\
	&= 
	\sum_{\substack{i+j=n+1 \\ i,j \ge 1}}
	(F_i,G_i) \circ_2 (F_j,G_j) (a_1,a_2,a_3,a_4,a_5).
\end{align*}

\bigskip\noindent
Thus, the cochain $(R, S) \in C^{(4,5)}(L;L)$ is given by

\begin{equation*}
	R ~= \sum_{\substack{i+j=n+1 \\ i,j \geq 1}}(F_i,G_i) \circ_1 (F_j,G_j), \quad
	S ~= \sum_{\substack{i+j=n+1 \\ i,j \geq 1}}(F_i,G_i) \circ_2 (F_j,G_j).
\end{equation*}

\bigskip\noindent
Now we are in a position to prove the Lemma \ref{obstruction_cocycle}. 
\begin{proof}[Proof of Lemma \ref{obstruction_cocycle}]
\smallskip\noindent
A direct computation gives us the following identity: For any two element $(F_i,G_i),(F_j,G_j) \in C^{(2,3)}(L;L)$ we have   
\begin{multline*}
	\delta \Big((F_i,G_i)\circ_1(F_j,G_j)~,~(F_i,G_i)\circ_2(F_j,G_j)\Big)
	\\
	= \Big(\delta_I\big((F_i,G_i)\circ_1(F_j,G_j)\big)~,~ \delta_{II} \big((F_i,G_i)\circ_2(F_j,G_j)\big) \Big)
\end{multline*} 
where 
\begin{align*}
	\delta_I \big((F_i,G_i)&\circ_1(F_j,G_j)\big) (a_1,a_2,a_3,a_4,a_5,a_6)\\ 
	&= 
	\delta(F_i,G_i) \circ_1 (F_j,G_j) (a_1,\ldots,a_6)  
	+ (F_i,G_i) \circ_1 \delta (F_j,G_j) (a_1,\ldots,a_6) \\
	&\qquad
	- [G_i(a_3,a_4,a_6),G_j(a_1,a_2,a_5)] 
	+ [G_j(a_3,a_4,a_6),G_i(a_1,a_2,a_5)] \\
	&\qquad
	+ [G_i(a_3,a_4,a_5),G_j(a_1,a_2,a_6)]
	- [G_j(a_3,a_4,a_5),G_i(a_1,a_2,a_6)] \\
	&\qquad
	+ \{G_i(a_1,a_2,a_3),a_4,F_j(a_5,a_6)\} 
	- \{G_j(a_1,a_2,a_3),a_4,F_i(a_5,a_6)\} \\
	&\qquad
	- \{G_i(a_1,a_2,a_4),a_3,F_j(a_5,a_6)\}
	+ \{G_j(a_1,a_2,a_4),a_3,F_i(a_5,a_6)\} \\ \\
	\delta_{II}\big((F_i,G_i)& \circ_2 (F_j,G_j)\big) (a_1,a_2,a_3,a_4,a_5,a_6,a_7) \\
	&=
	\delta(F_i,G_i) \circ_2 (F_j,G_j) G_j(a_1,a_2,\ldots,a_7) + (F_i,G_i) \circ_2 \delta (F_j,G_j) (a_1,a_2\ldots,a_7) \\
	&\qquad
	-\{G_i(a_1,a_2,a_3),a_4,G_j(a_5,a_6,a_7)\} 
	+ \{G_j(a_1,a_2,a_3),a_4,G_i(a_5,a_6,a_7)\} \\
	&\qquad
	- \{a_3,G_i(a_1,a_2,a_4),G_j(a_5,a_6,a_7)\} 
	+ \{a_3,G_j(a_1,a_2,a_4),G_i(a_5,a_6,a_7)\} \\
	&\qquad
	+ \{G_i(a_1,a_2,a_5),a_6,G_j(a_3,a_4,a_7)\} 
	- \{G_j(a_1,a_2,a_5),a_6,G_i(a_3,a_4,a_7)\} \\
	&\qquad
	+ \{G_i(a_1,a_2,a_5),G_j(a_3,a_4,a_6),a_7\} 
	- \{G_j(a_1,a_2,a_5),G_i(a_3,a_4,a_6),a_7\} \\
	&\qquad
	- \{G_i(a_1,a_2,a_6),a_5,G_j(a_3,a_4,a_7)\}
	+ \{G_j(a_1,a_2,a_6),a_5,G_i(a_3,a_4,a_7)\} \\
	&\qquad
	- \{G_i(a_1,a_2,a_6),G_j(a_3,a_4,a_5),a_7\}
	+ \{G_j(a_1,a_2,a_6),G_i(a_3,a_4,a_5),a_7\} \\
	&\qquad
	- \{G_i(a_3,a_4,a_5),a_6,G_j(a_1,a_2,a_7)\} 
	+ \{G_j(a_3,a_4,a_5),a_6,G_i(a_1,a_2,a_7)\} \\
	&\qquad
	- \{a_5,G_i(a_3,a_4,a_6),G_j(a_1,a_2,a_7)\} 
	+ \{a_5,G_j(a_3,a_4,a_6),G_i(a_1,a_2,a_7)\}. 
\end{align*}

\medskip\noindent
With the above identities in hand, we compute $\delta(R,S) = (\delta_IR, \delta_{II}S).$ Note that
\medskip
\begin{align*}
	\delta_I R 
	~~&=  \sum_{\substack{i+j=n+1 \\ i,j \ge 1}}
	\delta_I\big((F_i,G_i) \circ_1 (F_j,G_j)\big) \\
	&= \sum_{\substack{i+j=n+1 \\ i,j \ge 1}}
	\delta(F_i,G_i) \circ_1 (F_j,G_j) 
	+ (F_i,G_i) \circ_1 \delta (F_j,G_j) \\
	&= \sum_{\substack{i+j+k=n+1 \\ i,j \ge 1}} 	
	\Big(\big((F_i,G_i) \circ_1 (F_j,G_j)\big) \circ_1 (F_k,G_k) 
	+ (F_i,G_i) \circ_1\big((F_j,G_j) \circ_1 (F_k,G_k)\big)\Big)	
\end{align*}
\begin{align*}
	\delta_{II}S 
	~~&= 
	\sum_{\substack{i+j=n+1 \\ i,j \ge 1}}
	\delta_{II}\big((F_i,G_i) \circ_2 (F_j,G_j)\big) \\
	&= 
	\sum_{\substack{i+j=n+1 \\ i,j \ge 1}} 
	\delta(F_i,G_i) \circ_2 (F_j,G_j) 
	+ (F_i,G_i) \circ_2 \delta (F_j,G_j) \\
	&= \sum_{\substack{i+j+k=n+1 \\ i,j \ge 1}} 	
	\Big(\big((F_i,G_i) \circ_2 (F_j,G_j)\big) \circ_2 (F_k,G_k) 
	+ (F_i,G_i) \circ_2 \big((F_j,G_j) \circ_2 (F_k,G_k)\big)\Big).
\end{align*}

\bigskip\noindent
Next, we compute the summands 
\medskip 
\[\Big(\big((F_i,G_i) \circ_1 (F_j,G_j)\big) \circ_1 (F_k,G_k) + (F_i,G_i) \circ_1\big((F_j,G_j) \circ_1 (F_k,G_k)\big)\Big)\]
and 
\[\Big(\big((F_i,G_i) \circ_2 (F_j,G_j)\big) \circ_2 (F_k,G_k) + (F_i,G_i) \circ_2 \big((F_j,G_j) \circ_2 (F_k,G_k)\big)\Big)\]

\bigskip\noindent
appeared on the right hand side of $\delta_I R$ and $\delta_{II}S$ above, respectively. An explicit computation shows  
\medskip
\begin{align*}
	\big(\big((F_i,G_i) &\circ_1 (F_j,G_j)\big) \circ_1 (F_k,G_k) 
	+ (F_i,G_i) \circ_1\big((F_j,G_j) \circ_1 (F_k,G_k)\big)\big)	
	(a_1,a_2,a_3,a_4,a_5,a_6) \\ 
	&=G_i(G_k(a_1,a_2,a_3),a_4,F_j(a_5,a_6)) 
	- G_i(G_j(a_1,a_2,a_3),a_4,F_k(a_5,a_6)) \\
	&\qquad
	+ G_i(a_3,G_k(a_1,a_2,a_4),F_j(a_5,a_6))
	- G_i(a_3,G_j(a_1,a_2,a_4),F_k(a_5,a_6)) \\
	&\qquad
	- F_i(G_k(a_1,a_2,a_5),G_j(a_3,a_4,a_6)) 
	+ F_i(G_j(a_1,a_2,a_5),G_k(a_3,a_4,a_6)) \\
	&\qquad
	+ F_i(G_k(a_3,a_4,a_5),G_j(a_1,a_2,a_6))
	- F_i(G_j(a_3,a_4,a_5),G_k(a_1,a_2,a_6))
\end{align*}
and
\medskip
\begin{align*}
	\Big(\big((F_i,G_i) &\circ_2 (F_j,G_j)\big) \circ_2 (F_k,G_k) + (F_i,G_i) \circ_2 \big((F_j,G_j) \circ_2 (F_k,G_k)\big)\Big) (a_1,a_2,a_3,a_4,a_5,a_6,a_7) \\ 
	&=
	G_i(G_j(a_1,a_2,a_3),a_4,G_k(a_5,a_6,a_7)) 
	- G_i(G_k(a_1,a_2,a_3),a_4,G_j(a_5,a_6,a_7)) \\
	&\qquad
	+ G_i(a_3,G_j(a_1,a_2,a_4),G_k(a_5,a_6,a_7)) 
	- G_i(a_3,G_k(a_1,a_2,a_4),G_j(a_5,a_6,a_7)) \\
	&\qquad
	- G_i(G_j(a_1,a_2,a_5),a_6,G_k(a_3,a_4,a_7)) 
	+ G_i(G_k(a_1,a_2,a_5),a_6,G_j(a_3,a_4,a_7)) \\
	&\qquad
	- G_i(G_j(a_1,a_2,a_5),G_k(a_3,a_4,a_6),a_7) 
	+ G_i(G_k(a_1,a_2,a_5),G_j(a_3,a_4,a_6),a_7) \\
	&\qquad
	+ G_i(G_j(a_1,a_2,a_6),a_5,G_k(a_3,a_4,a_7))
	- G_i(G_k(a_1,a_2,a_6),a_5,G_j(a_3,a_4,a_7)) \\
	&\qquad
	+ G_i(G_j(a_1,a_2,a_6),G_k(a_3,a_4,a_5),a_7) 
	- G_i(G_k(a_1,a_2,a_6),G_j(a_3,a_4,a_5),a_7) \\
	&\qquad
	+ G_i(G_j(a_3,a_4,a_5),a_6,G_k(a_1,a_2,a_7)) 
	- G_i(G_k(a_3,a_4,a_5),a_6,G_j(a_1,a_2,a_7)) \\
	&\qquad
	+ G_i(a_5,G_j(a_3,a_4,a_6),G_k(a_1,a_2,a_7))
	- G_i(a_5,G_k(a_3,a_4,a_6),G_j(a_1,a_2,a_7)).
\end{align*}

\bigskip\noindent
Putting this identities in the expression of $\delta_I R$ and $\delta_{II} S$ and simplifying we obtain \\$\delta (R,S) = 0$. 
Hence,  $\big(R,S\big) \in Z^{(4,5)}(L;L),$ that is, $(R,S)$ is a $(4,5)$-cocycle.
\end{proof}

\medskip\noindent
Thus, we see that 
\medskip
$$\big((P, Q), (R, S )\big) \in C^{(3, 4)}(L; L)\times C^{(4,5)}(L;L)$$ 

\medskip\noindent
is a cochain with $(R, S) \in  C^{(4,5)}(L;L)$ being a cocycle. This cochain appears as an obstruction to extend a given deformation of order $n$ to a deformation of the next higher order. This motivates us to define  the following cochain complex defining the deformation cohomology.

\begin{Def}\label{deformation-cohomology}
Consider the cochain complex
\bigskip
\[\begin{tikzcd}
C(L;L) & {C^{(2,3)}(L;L)} & {C^{(3,4)}(L;L) \times C^{(4,5)}(L;L)} & {C^{(6,7)}(L;L)} & \cdots
\arrow["\delta", from=1-1, to=1-2]
\arrow["{(\delta^*,~ \delta)}", from=1-2, to=1-3]
\arrow["{(0,\delta)}", from=1-3, to=1-4]
\arrow["\delta", from=1-4, to=1-5]
\end{tikzcd}\] 

\bigskip\noindent
where the cochain groups and the coboundary maps are the same as defined in Section \ref{$2$}. This is called the deformation complex and the cohomology groups defined by it are the deformation cohomology groups.
\end{Def}
\noindent 
Note that $\mbox{Ker}~(\delta^\star, \delta) = Z^{(2, 3)}(L; L)$ and $\mbox{Img}~(\delta) = B^{(2,3)}(L; L).$
Thus, the deformation cohomology groups important in the present context are

$$H^{(2, 3)}(L; L) := \frac{Z^{(2, 3)}(L; L)}{B^{(2, 3)}(L; L)}$$ and  
$$H^{((3, 4), (4, 5))}(L; L) := \frac{\mbox{Ker}~(0, \delta)}{\mbox{Img}~(\delta^*,~ \delta)}.$$ 

\bigskip\noindent
Clearly, by construction of this cochain complex and Lemma \ref{obstruction_cocycle} 
\medskip
$$\big((P, Q), (R, S)\big) \in \mbox{Ker}~(0, \delta)$$ 

\smallskip\noindent
and hence, represent a cohomology class in $H^{((3, 4), (4, 5))}(L; L).$ We now state below one of the main theorem of the paper.

\begin{Thm} \label{obstruction_vanish}
Let $(L, [~, ~], \{~, ~, ~\})$ be a Lie-Yamaguti algebra. Let $(F_t, G_t)$ be a deformation of $L$ of order $n$ given by a sequence
$(F_1, G_1), \ldots, (F_n, G_n)$ of $(2,3)$-cochains satisfying  equations $(\ref{DE3})-(\ref{DE6})$ for $1\leq \nu \leq n.$ Then, the cohomology class of $\big((P,Q), (R,S)\big)$ vanishes if and only if the given $n^{th}$ order deformation extends to a $(n+1)^{th}$ order deformation, that is, to a sequence $(F_1,G_1),(F_2,G_2),\ldots,(F_n,G_n), (F_{n+1},G_{n+1})$ satisfying $(\ref{DE3})-(\ref{DE6})$ for all $\nu =1,2,\ldots, n+1$.
\end{Thm}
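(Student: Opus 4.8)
The plan is to read the four deformation equations $(\ref{DE3})$–$(\ref{DE6})$ at level $\nu = n+1$ as four equations for the unknown cochain $(F_{n+1}, G_{n+1}) \in C^{(2,3)}(L;L)$, with the left-hand sides being precisely the known data $P, Q, R, S$ built from $(F_1,G_1),\ldots,(F_n,G_n)$. Written out, the system says
\[
\delta_I^\star F_{n+1} = -P, \quad \delta_{II}^\star G_{n+1} = -Q, \quad \delta_I F_{n+1} = -R, \quad \delta_{II} G_{n+1} = -S,
\]
so that the $n$th order deformation extends to order $n+1$ if and only if there exists $(F_{n+1}, G_{n+1}) \in C^{(2,3)}(L;L)$ with $(\delta^\star, \delta)(F_{n+1}, G_{n+1}) = -\big((P,Q),(R,S)\big)$ (one must also check $(\ref{DE1})$–$(\ref{DE2})$, i.e.\ that one may choose $F_{n+1}$ alternating and $G_{n+1}$ alternating in its first two slots, but this is automatic by anti-symmetrizing). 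By the definition of the deformation complex in Definition \ref{deformation-cohomology}, this says exactly that $\big((P,Q),(R,S)\big)$ lies in the image of $(\delta^\star, \delta): C^{(2,3)}(L;L) \to C^{(3,4)}(L;L) \times C^{(4,5)}(L;L)$, i.e.\ is a coboundary $B^{((3,4),(4,5))}(L;L)$.

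So the theorem follows once we know two things: first, that $\big((P,Q),(R,S)\big)$ is a cocycle in the deformation complex, i.e.\ lies in $\mbox{Ker}(0,\delta)$ — but this is exactly the content of Lemma \ref{obstruction_cocycle}, which gives $\delta(R,S)=0$, together with the trivial observation that the first component is killed by the zero map; and second, that its cohomology class in $H^{((3,4),(4,5))}(L;L) = \mbox{Ker}(0,\delta)/\mbox{Img}(\delta^\star,\delta)$ being zero is equivalent to solvability of the above system. The second point is immediate from the definition of the quotient. Hence the ``if and only if'' is purely formal once the bookkeeping is set up: vanishing of the class means $\big((P,Q),(R,S)\big) = (\delta^\star, \delta)(F_{n+1},G_{n+1})$ for some $(F_{n+1},G_{n+1})$, and then defining $F_t, G_t$ by appending this term to the truncated series produces a deformation of order $n+1$ by construction, since equations $(\ref{DE3})$–$(\ref{DE6})$ at level $\nu \le n$ are unaffected and at level $\nu = n+1$ hold by the choice of $(F_{n+1},G_{n+1})$; conversely any such extension provides the required $(F_{n+1},G_{n+1})$.

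The step that requires genuine care is verifying that the system is governed by $(\delta^\star, \delta)$ with the sign and grading conventions matching: one must check that the right-hand sides of $(\ref{DE3})$–$(\ref{DE6})$, read off at $\nu = n+1$ after pulling out the $i=0$ and $j=0$ terms exactly as was done in Section \ref{$3$} to rewrite $(\ref{de3})$–$(\ref{de6})$ as $(\ref{DE3})$–$(\ref{DE6})$, reproduce precisely the four components $\delta_I^\star F_{n+1}$, $\delta_{II}^\star G_{n+1}$, $\delta_I F_{n+1}$, $\delta_{II} G_{n+1}$ with the correct signs. This is the main (and essentially the only) obstacle, and it is bookkeeping rather than conceptual: the displayed identities for $\delta_I^\star, \delta_{II}^\star, \delta_I, \delta_{II}$ on a $(2,3)$-cochain $(F_\nu, G_\nu)$ already recorded just before Definition \ref{deformation-cohomology} do this matching for us. I would therefore present the proof as: (i) rewrite the extension requirement as the four displayed equations; (ii) identify the left sides with $-(P,Q)$ and $-(R,S)$ by $(\ref{O1})$–$(\ref{O4})$; (iii) invoke Lemma \ref{obstruction_cocycle} and the construction of the deformation complex to conclude $\big((P,Q),(R,S)\big) \in \mbox{Ker}(0,\delta)$; (iv) observe that solvability is equivalent to this cochain lying in $\mbox{Img}(\delta^\star,\delta)$, i.e.\ to vanishing of its class in $H^{((3,4),(4,5))}(L;L)$; (v) handle $(\ref{DE1})$–$(\ref{DE2})$ by anti-symmetrization.
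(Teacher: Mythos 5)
Your proposal is correct and follows essentially the same route as the paper: rewrite (\ref{DE3})--(\ref{DE6}) at $\nu=n+1$ as $(\delta^\star,\delta)(F_{n+1},G_{n+1})=-\big((P,Q),(R,S)\big)$, invoke Lemma \ref{obstruction_cocycle} to see the obstruction is a cocycle of the deformation complex, and note that solvability is exactly vanishing of its class in $H^{((3,4),(4,5))}(L;L)$. The only superfluous point is step (v): elements of $C^{(2,3)}(L;L)$ already satisfy the alternating conditions (\ref{DE1})--(\ref{DE2}) by definition of the cochain spaces, so no separate anti-symmetrization is needed.
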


\begin{proof}
Suppose we have a deformation of order $n \geq 1$ given by a sequence $(F_1, G_1), \ldots, (F_n, G_n)$ of $(2,3)$-cochains satisfying  equations (\ref{DE3})-(\ref{DE6}) for $1\leq \nu \leq n.$
\medskip\noindent
Rewriting equation (\ref{DE3})-(\ref{DE6}) for $\nu =n+1$ we get,
\begin{align*}
	P(a,b,c) ~=~ - \delta_I^* F_{n+1}(a,b,c), &\quad Q(a,b,c,d) ~=~ - \delta_{II}^*G_{n+1}(a,b,c,d),\\
	R(a,b,c,d) ~=~ - \delta_IF_{n+1}(a,b,c,d), &\quad 
	S(a,b,c,d,e) ~=~ - \delta_{II}G_{n+1} (a,b,c,d,e),
\end{align*}
$a, b, c, d, e \in L.$
\medskip\noindent
Now if the cohomology class of $\big((P,Q), (R,S)\big)$ vanishes, then there exists $(F_{n+1},G_{n+1}) \in C^{(2,3)}(L;L)$ such that 
$$\delta^*(F_{n+1},G_{n+1})  ~=~ -(P,Q), \quad \delta(F_{n+1},G_{n+1}) ~=~ -(R,S).$$
As a result, we have a $(2,3)$-cochain, $(F_{n+1},G_{n+1}) \in C^{(2,3)}(L;L)$ satisfying (\ref{DE3}), (\ref{DE4}), (\ref{DE5}), and (\ref{DE6}) for $\nu =n+1$. Hence, one can extend an $n^{th}$ order deformation to an $(n+1)^{th}$ order deformation.

\bigskip\noindent
Conversely, let $(F_1,G_1),\ldots,(F_n,G_n)$ be a given $n^{th}$ order deformation of $L$ that can be extended to a $(n+1)^{th}$ order deformation. Let $(F_{n+1}, G_{n+1})$ be a $(2, 3)$-cochain such that the $(2, 3)$ cochains $(F_\nu, G_\nu)$ satisfy (\ref{DE3})-(\ref{DE6}) for $1 \leq \nu \leq n+1.$ This would force the corresponding obstruction cochain $\big((P,Q), (R,S)\big)$ to be a coboundary. Hence the cohomology class of $\big((P,Q), (R,S)\big)$ vanishes. 
\end{proof}

\medskip

It follows that with $F_0 = [~,~],$ $G_0= \{~,~,~\}$ being the original Lie-Yamaguti algebra structure, $\sum F_it^i$ is a deformation modulo $t^{n+1}$ if and only if the deformation equations hold for $1 \le \nu \le n$ and this can be extended to a deformation modulo $t^{n+2}$ if and only if the cohomology class of $\big((P,Q), (R,S)\big)$ vanishes. 

\begin{Def}
The cohomology class of $\big((P,Q), (R,S)\big)$ is called the {\bf obstruction class} to extend the sequence $(F_1,G_1),(F_2,G_2),\ldots,(F_n,G_n)$ satisfying equations (\ref{DE3})-(\ref{DE6}), for $1 \le \nu \le n$ to a sequence $(F_1,G_1),(F_2,G_2),\ldots,(F_n,G_n), (F_{n+1},G_{n+1})$ satisfying equations (\ref{DE3})-(\ref{DE6}), for $1 \le \nu \le n+1$.
\end{Def} 

As a consequence of Theorem \ref{obstruction_vanish}, we have the following sufficient condition for integrability of every $(2,3)$-cocycle.

\begin{Cor}\label{Int_thm}
Let $(L,[~,~],\{~,~,~\})$ be a Lie-Yamaguti algebra such that
\[H^{((3,4), (4,5))}(L; L) = 0.\]  	Then all the obstruction classes vanish and hence any $(2,3)$-cocycle $(F_1,G_1) \in Z^{(2,3)}(L; L)$ is integrable.  
\end{Cor}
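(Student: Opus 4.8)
The plan is to derive the corollary as a direct consequence of Theorem~\ref{obstruction_vanish} together with Proposition~\ref{n-infinitesimal-cocycle} by an induction on the order of the deformation. Let $(F_1, G_1) \in Z^{(2,3)}(L; L)$ be an arbitrary $(2,3)$-cocycle. Setting $F_0 = [~,~]$ and $G_0 = \{~,~,~\}$, the pair $(F_0, G_0), (F_1, G_1)$ constitutes a deformation of order $1$: indeed, equations (\ref{DE1})--(\ref{DE2}) hold since $(F_1, G_1) \in C^{(2,3)}(L; L)$, and equations (\ref{DE3})--(\ref{DE6}) for $\nu = 1$ are precisely the statements $\delta_I^\star F_1 = 0$, $\delta_{II}^\star G_1 = 0$, $\delta_I F_1 = 0$, $\delta_{II} G_1 = 0$, which hold because $(F_1, G_1)$ is a $(2,3)$-cocycle (the left-hand sides of (\ref{DE3})--(\ref{DE6}) for $\nu = 1$ are empty sums, since the condition $i, j > 0$ with $i + j = 1$ is never satisfied). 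This provides the base case.

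For the inductive step, suppose we have constructed a deformation of order $n \geq 1$, that is, a sequence $(F_0, G_0), (F_1, G_1), \ldots, (F_n, G_n)$ of $(2,3)$-cochains satisfying (\ref{DE3})--(\ref{DE6}) for $1 \leq \nu \leq n$. By Lemma~\ref{obstruction_cocycle} and the discussion preceding Theorem~\ref{obstruction_vanish}, the obstruction cochain $\big((P, Q), (R, S)\big)$ lies in $\mbox{Ker}~(0, \delta)$ and hence represents a cohomology class in $H^{((3, 4), (4, 5))}(L; L)$. By hypothesis this group is trivial, so the obstruction class vanishes. Theorem~\ref{obstruction_vanish} then guarantees the existence of a $(2,3)$-cochain $(F_{n+1}, G_{n+1}) \in C^{(2,3)}(L; L)$ extending the sequence to a deformation of order $n+1$, i.e., satisfying (\ref{DE3})--(\ref{DE6}) for $1 \leq \nu \leq n+1$.

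Iterating this construction yields an infinite sequence $(F_0, G_0), (F_1, G_1), (F_2, G_2), \ldots$ of $(2,3)$-cochains such that, for every $\nu \geq 1$, equations (\ref{DE3})--(\ref{DE6}) hold. Together with (\ref{DE1})--(\ref{DE2}) (automatic from membership in $C^{(2,3)}(L; L)$) and the fact that for $\nu = 0$ the deformation equations reduce to the Lie-Yamaguti axioms (\ref{LY1})--(\ref{LY6}) for the given structure $(F_0, G_0)$, we conclude that $L[[t]]$ equipped with $F_t = \sum_{i \geq 0} F_i t^i$ and $G_t = \sum_{i \geq 0} G_i t^i$ is a formal one-parameter deformation of $L$ whose infinitesimal is $(F_1, G_1)$. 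Hence $(F_1, G_1)$ is integrable, and since it was arbitrary, every $(2,3)$-cocycle is integrable.

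I do not anticipate a genuine obstacle here, as the corollary is essentially a formal bookkeeping argument once Theorem~\ref{obstruction_vanish} is in hand; the only point requiring a little care is the verification of the base case, namely that the order-$1$ deformation equations (\ref{DE3})--(\ref{DE6}) for $\nu = 1$ have empty left-hand sides and therefore reduce exactly to the cocycle conditions on $(F_1, G_1)$, so that an arbitrary $(2,3)$-cocycle does indeed initiate a deformation of order $1$. One should also note explicitly that the repeated application of Theorem~\ref{obstruction_vanish} produces a coherent sequence (each extension retains the previously chosen terms $(F_1, G_1), \ldots, (F_n, G_n)$ unchanged), so that the limiting power series is well-defined.
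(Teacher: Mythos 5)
Your proof is correct and follows exactly the route the paper intends (the paper states the corollary without a written proof, as an immediate consequence of Theorem \ref{obstruction_vanish}): induct on the order, noting that the $\nu=1$ equations have empty left-hand sides so any $(2,3)$-cocycle starts an order-$1$ deformation, and that the hypothesis $H^{((3,4),(4,5))}(L;L)=0$ kills every obstruction class at each stage. Your explicit verification of the base case and the remark about coherence of the successive extensions are exactly the bookkeeping details the paper leaves implicit.
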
  

\section{\Large Equivalent deformations and Rigidity}\label{$5$}
In this section, we introduce the notion of equivalence of deformations and rigid object.

Let $(L,[~,~],\{~,~,~\})$ be a Lie-Yamaguti algebra over $\mathbb K.$ Let $L[[t]] := L \otimes_{\mathbb K}K.$ Note that any $\mathbb K$-linear map $\phi : L   \to L$ extends to a $K$-linear map $\phi_t : L[[t]] \to L[[t]]$ over $K.$ Any such extension is said to be {\it defined over $K$}. Let  $(L[[t]], F_t, G_t)$  and  $(L[[t]], F^\prime_t, G^\prime_t)$ be two formal one-parameter deformations of $L$ given by
\[F_t(a,b) := F_0(a,b) + F_1(a,b)t + F_2(a,b)t^2 + \cdots \qquad a,b \in L\]
\[G_t(a,b,c) := G_0(a,b,c) + G_1(a,b,c)t + G_2(a,b,c)t^2+\cdots \qquad a,b,c \in L,\] where $F_0= [~,~],$ and  $G_0 = \{~,~,~\}$ are the original brackets on $L.$  Note that any $K$-linear homomorphism $L[[t]] \to L[[t]]$ is of the form $\sum_i\psi_it^i,$ where $\psi_i \in \mbox{Hom}_{\mathbb K}(L, L).$

\begin{Def}
A (formal) isomorphism $\Psi_t: (L[[t]], F_t, G_t) \to (L[[t]], F^\prime_t, G^\prime_t)$ is a $K$-Linear automorphism 
$$ \Psi_t(a) = a + \sum_{i\geq 1}\psi_i(a),$$ where each $\psi_i: L[[t]] \to L[[t]]$ is a linear map defined over $K$ such that 
\[\Phi_t (F_t(a,b)) = F^\prime_t\left(\Phi_t(a),\Phi_t(b)\right) \quad \text{and} \quad \Phi_t ( G_t(a,b,c)) = G^\prime_t\left(\Phi_t(a),\Phi_t(b),\Phi_t(c)\right) \]
for all $a, b, c \in L.$ 
\end{Def}

\begin{Def}
Let $(L,[~,~],\{~,~,~\})$ be a Lie-Yamaguti algebra over $\mathbb K.$ Two one-parameter formal deformations  $(L[[t]], F_t, G_t)$  and  $(L[[t]], F^\prime_t, G^\prime_t)$  of a Lie-Yamaguti algebra $L$ are said to be equivalent, denoted by $(F_t,G_t) \sim (F^\prime_t, G^\prime_t)$, if there exists a formal isomorphism of $K$-modules 
\[\Phi_t: (L[[t]],F_t,G_t) \to (L[[t]],F^\prime_t, G^\prime_t)\] as defined above.
\end{Def}

\begin{Def}
Any deformation $(L[[t]], F_t,G_t)$ equivalent to the identity deformation \ref{identity-deformation} is called a {\bf trivial deformation}.
\end{Def}

The relation between infinitesimals of two equivalent deformations is given by the following proposition. 

\begin{Prop} Let $(L[[t]],F_t,G_t)$ and  $(L[[t]],F^\prime_t, G^\prime_t)$ be two equivalent one-parameter formal deformations of a Lie-Yamaguti algebra $L$. Then the corresponding infinitesimals $(F_1,G_1)$ and $(F^\prime_1, G^\prime_1)$ determine the same cohomology class in $H^{(2,3)}(L; L)$. 
\end{Prop}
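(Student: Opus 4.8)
The plan is to extract the first-order consequence of the equivalence condition $\Phi_t(F_t(a,b)) = F'_t(\Phi_t(a),\Phi_t(b))$ and $\Phi_t(G_t(a,b,c)) = G'_t(\Phi_t(a),\Phi_t(b),\Phi_t(c))$, where $\Phi_t = \mathrm{Id} + t\psi_1 + t^2\psi_2 + \cdots$ with $\psi_i \in \mathrm{Hom}_{\mathbb K}(L,L)$. First I would expand both sides of each of these two identities as power series in $t$ and collect the coefficient of $t^1$. On the bilinear side, using $F_0 = [~,~]$, $F'_0 = [~,~]$, this yields
\[
F_1(a,b) + \psi_1([a,b]) = F'_1(a,b) + [\psi_1(a),b] + [a,\psi_1(b)],
\]
and on the trilinear side, using $G_0 = G'_0 = \{~,~,~\}$,
\[
G_1(a,b,c) + \psi_1(\{a,b,c\}) = G'_1(a,b,c) + \{\psi_1(a),b,c\} + \{a,\psi_1(b),c\} + \{a,b,\psi_1(c)\}.
\]
So $F_1 - F'_1 = \delta_I\psi_1$ and $G_1 - G'_1 = \delta_{II}\psi_1$ once I match the right-hand sides against the formulas for $\delta_I f(a,b) = \rho(a)f(b) - \rho(b)f(a) - f([a,b])$ and $\delta_{II}f(a,b,c) = \theta(b,c)f(a) - \theta(a,c)f(b) + D(a,b)f(c) - f(\{a,b,c\})$ in the adjoint representation, where $\rho(a)(b) = [a,b]$, $D(a,b)(c) = \{a,b,c\}$, $\theta(a,b)(c) = \{c,a,b\}$.

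The routine but necessary bookkeeping step is the sign/ordering check: $\delta_I\psi_1(a,b) = [a,\psi_1(b)] - [b,\psi_1(a)] - \psi_1([a,b])$, which by anti-symmetry of $[~,~]$ equals $[a,\psi_1(b)] + [\psi_1(a),b] - \psi_1([a,b])$, matching the $t^1$-coefficient computed above; similarly $\delta_{II}\psi_1(a,b,c) = \theta(b,c)\psi_1(a) - \theta(a,c)\psi_1(b) + D(a,b)\psi_1(c) - \psi_1(\{a,b,c\}) = \{\psi_1(a),b,c\} - \{\psi_1(b),a,c\} + \{a,b,\psi_1(c)\} - \psi_1(\{a,b,c\})$, and by (LY2) $-\{\psi_1(b),a,c\} = \{a,\psi_1(b),c\}$, again matching. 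Hence $(F_1,G_1) - (F'_1,G'_1) = (\delta_I\psi_1, \delta_{II}\psi_1) = \delta(\psi_1,\psi_1) \in B^{(2,3)}(L;L)$, since $(\psi_1,\psi_1) \in C(L;L)$ is a diagonal element of $C^1(L;L) \times C^1(L;L)$ and $\delta(f,f) = (\delta_I f, \delta_{II} f)$ by the definition of the coboundary $\delta : C(L;L) \to C^{(2,3)}(L;L)$.

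By Proposition \ref{n-infinitesimal-cocycle} both $(F_1,G_1)$ and $(F'_1,G'_1)$ lie in $Z^{(2,3)}(L;L)$, so their difference being a coboundary means precisely that they represent the same class in $H^{(2,3)}(L;L) = Z^{(2,3)}(L;L)/B^{(2,3)}(L;L)$. I do not anticipate a genuine obstacle here; the only place demanding care is the second point above — correctly translating the combinatorial $t^1$-coefficients into the representation-theoretic coboundary formulas, keeping the signs from (LY1), (LY2) straight and remembering that the relevant coboundary operator acts on the \emph{diagonal} subspace $C(L;L)$ rather than on all of $C^1(L;L) \times C^1(L;L)$, which is exactly why a single map $\psi_1$ (not a pair) suffices to witness the cohomological equivalence.
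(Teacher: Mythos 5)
Your proposal is correct and follows essentially the same route as the paper: expand the equivalence condition, compare coefficients of $t^1$, and identify the difference of the infinitesimals with $\delta(\psi_1,\psi_1)\in B^{(2,3)}(L;L)$. The extra sign and ordering checks you carry out are sound and merely make explicit what the paper leaves implicit.
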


\begin{proof}
Given $(F_t, G_t) \sim (F^\prime_t, G^\prime_t).$ Let $\Phi_t(a)= \sum_{i \geq 0} \phi_i(a) t^i$  be a formal isomorphism
\medskip
\[\Phi_t: (L[[t]],F_t,G_t) \to (L[[t]],F^\prime_t, G^\prime_t).\] Then, for any $a,b,c \in L$ we have  

\[\sum_{i \geq 0} \phi_i\Big(\sum_{j \geq 0} F_j(a,b)t^j\Big)t^i 
~=~
\sum_{i \geq 0} F^\prime_i\Big(\sum_{k \geq 0} \phi_k(a)t^k,\sum_{l \geq 0}\phi_l(b)t^l\Big)t^i\]
and 
\[\sum_{i \geq 0} \phi_i\Big(\sum_{j \geq 0} G_j(a,b,c)t^j\Big)t^i
~=~
\sum_{i \geq 0} G^\prime_i\Big(\sum_{k \geq 0}\phi_k(a)t^k,\sum_{l \geq 0}\phi_l(b)t^l,\sum_{m \geq 0}\phi_m(c)t^m\Big)t^i.\]

\medskip\noindent
Comparing the coefficients of $t^1$ from the above equations we have
$$F_1(a,b) + \phi_1([a,b]) = F^\prime_1(a,b) + [\phi_1(a),b] + [a,\phi_1(b)].$$
Equivalently,
\begin{align*} 
F_1(a,b) - F^\prime_1(a,b) ~&=~ [a,\phi_1(b)] + [\phi_1(a),b] - \phi_1([a,b]) \\
~&=~ \delta_I \phi_1 (a,b)
\end{align*}
and 
$$G_1(a,b,c) + \phi_1(\{a,b,c\}) = G^\prime_1(a,b,c) + \{\phi_1(a),b,c\} + \{a,\phi_1(b),c\} + \{a,b,\phi_1(c)\}.$$
Equivalently,
\begin{align*}
G_1(a,b,c) - G^\prime_1(a,b,c)
~&=~
\{\phi_1(a),b,c\} + \{a,\phi_1(b),c\} + \{a,b,\phi_1(c)\} - \phi_1(\{a,b,c\}) \\ 
~&=~ \delta_{II}\phi_1(a,b,c).
\end{align*}

\noindent
In other words,
$$(F_1-F^\prime_1, G_1-G^\prime_1)= \delta \phi \in B^{(2,3)}(L; L).$$ Thus $(F_1,G_1)$ and $(F^\prime_1, G^\prime_1)$ are cohomologous and hence, determine the same cohomology class. 
\end{proof}

On the other hand, if two $(2, 3)$-cocycles are cohomologous and one is the infinitesimal of a deformation, then so is the other of an equivalent deformation.

\begin{Prop}
Let $(L[[t]], F_t, G_t)$ be a deformation of a Lie-Yamaguti algebra $(L, [~,~], \{~,~,~\}),$ with the infinitesimal $(F_1, G_1)$ where
$$F_t=\sum_{i\geq 0} F_it^i \quad \text{and} \quad  G_t=\sum_{i\geq 0}G_it^i.$$ Let $(F^\prime_1, G^\prime_1)$ be a $(2, 3)$-cocycle cohomologous to $(F_1, G_1).$ Then there exists a deformation of $L$ equivalent to  $(L[[t]], F_t, G_t)$ with infinitesimal $(F^\prime_1, G^\prime_1).$
\end{Prop}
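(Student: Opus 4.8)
The plan is to construct the equivalent deformation explicitly by conjugating $(F_t, G_t)$ with a suitable formal automorphism, and then to use the cohomologous hypothesis to arrange that the resulting infinitesimal is precisely $(F_1', G_1')$. Since $(F_1', G_1')$ is cohomologous to $(F_1, G_1)$, there is a linear map $\phi_1 \in \mathrm{Hom}_{\mathbb K}(L, L)$ with $(F_1 - F_1', G_1 - G_1') = \delta(\phi_1, \phi_1) = (\delta_I\phi_1, \delta_{II}\phi_1)$. Define the formal automorphism $\Phi_t : L[[t]] \to L[[t]]$ by $\Phi_t(a) = a + \phi_1(a)\, t$ (equivalently $\Phi_t = \mathrm{Id} + \phi_1 t$); this is $K$-linear, defined over $K$, and invertible over $K[[t]]$ with inverse $\Phi_t^{-1} = \mathrm{Id} - \phi_1 t + \phi_1^2 t^2 - \cdots$.

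Next I would \emph{define} the new brackets by transport of structure: set
\[
F_t'(a,b) := \Phi_t\big(F_t(\Phi_t^{-1}(a), \Phi_t^{-1}(b))\big), \qquad
G_t'(a,b,c) := \Phi_t\big(G_t(\Phi_t^{-1}(a), \Phi_t^{-1}(b), \Phi_t^{-1}(c))\big).
\]
By construction $\Phi_t$ is then a formal isomorphism $(L[[t]], F_t, G_t) \to (L[[t]], F_t', G_t')$, so the two deformations are equivalent; one checks directly that $F_0' = [~,~]$, $G_0' = \{~,~,~\}$ (since $\Phi_0 = \mathrm{Id}$), and that $(L[[t]], F_t', G_t')$ satisfies the axioms of a Lie-Yamaguti algebra over $K$ because the axioms are preserved under conjugation by a $K$-linear automorphism — indeed $(F_t', G_t')$ is obtained from the Lie-Yamaguti algebra $(L[[t]], F_t, G_t)$ by an isomorphism of $K$-modules, so it is automatically a formal one-parameter deformation of $L$ in the sense of Definition \ref{deformation}.

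It then remains to compute the $t^1$-coefficient of $F_t'$ and $G_t'$ and check it equals $(F_1', G_1')$. Expanding $F_t'(a,b) = (\mathrm{Id} + \phi_1 t)\big(F_t((\mathrm{Id} - \phi_1 t + \cdots)a, (\mathrm{Id} - \phi_1 t + \cdots)b)\big)$ and collecting the coefficient of $t$, the $F_0 = [~,~]$ term contributes $\phi_1([a,b]) - [\phi_1(a), b] - [a, \phi_1(b)] = -\delta_I\phi_1(a,b)$, and the $F_1$ term contributes $F_1(a,b)$, so the new infinitesimal in the bilinear slot is $F_1(a,b) - \delta_I\phi_1(a,b) = F_1'(a,b)$ by the choice of $\phi_1$; the analogous computation in the trilinear slot, using the three substitution terms, gives $G_1 - \delta_{II}\phi_1 = G_1'$. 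Hence $(L[[t]], F_t', G_t')$ is a deformation of $L$ equivalent to $(L[[t]], F_t, G_t)$ with infinitesimal $(F_1', G_1')$.

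The main obstacle — more bookkeeping than genuine difficulty — is verifying that the conjugated pair $(F_t', G_t')$ genuinely satisfies all six Lie-Yamaguti deformation equations \eqref{de1}–\eqref{de6} over $K$; the cleanest route is the structural observation above (conjugation by a $K$-linear automorphism sends a Lie-Yamaguti $K$-algebra structure to a Lie-Yamaguti $K$-algebra structure), which avoids comparing the equations term by term in powers of $t$. The only other point requiring care is the convergence/formal-invertibility of $\Phi_t^{-1}$, which is standard since $\Phi_t - \mathrm{Id}$ raises the $t$-adic filtration degree.
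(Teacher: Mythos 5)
Your proposal is correct and follows essentially the same route as the paper: choose a $1$-cochain realizing the coboundary between the two cocycles, conjugate $(F_t,G_t)$ by the formal automorphism $\mathrm{Id}+\phi_1 t$, and read off the $t^1$-coefficient. The only differences are the direction of transport and the sign convention for the coboundary, both of which you handle consistently.
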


\begin{proof}
By the hypothesis, there exists some $\lambda \in C^1(L;L)$ such that $(F^\prime_1, G^\prime_1) = (F_1,G_1) + \delta (\lambda, \lambda).$ 
Let $\Phi_t:L[[t]] \to L[[t]]$ be the formal isomorphism obtained by extending linearly the map 
\[\Phi_t(a) ~=~ a + \lambda(a) t, ~~a \in L\] to $L[[t]].$
More precisely, $\Phi_t$ is an isomorphism of $K$-modules, where the inverse map is given by 
\[\Phi^{-1}_t(a) ~=~ a + \sum_{i \ge 1} (-1)^i \lambda^i(a) t^i.\]
We define $(F^\prime_t, G^\prime_t)$ as follows
\begin{eqnarray*}
F^\prime_t(a,b) &=& \Phi_t^{-1} (F_t(\Phi_t(a),\Phi_t(b))) \\
G^\prime_t (a,b,c) &=& \Phi_t^{-1}(G_t(\Phi_t(a),\Phi_t(b),\Phi_t(c)))
\end{eqnarray*}  
Note that $(F^\prime_t, G^\prime_t)$ is equivalent to $(F_t, G_t)$. Now we show that $(F^\prime_1, G^\prime_1)$ is the infinitesimal of $(F^\prime_t, G^\prime_t)$. Rewriting the expression of $F^\prime_t(a,b)$ we get 
\begin{eqnarray*}
\Phi_t^{-1}(F_t(\Phi_t(a),\Phi_t(b))) &=& \Phi_t^{-1} \left(\sum_{i \ge 0} F_i\big(a+\lambda(a)t, b+\lambda(b)t\big) ~t^i\right).
\end{eqnarray*}
Expanding the above equation we get the coefficient of $t$ as 
\[F_1(a,b) + [\lambda(a),b] + [a,\lambda(b)] - \lambda([a,b]) ~=~ F_1(a,b) + \delta_I \lambda(a,b) = F^\prime_1(a,b).\]
Again rewriting the expression of $G^\prime_t (a,b,c)$ we get 
\begin{eqnarray*}
\Phi_t^{-1}(G_t(\Phi_t(a),\Phi_t(b),\Phi_t(c))) &=& \Phi_t^{-1} \left(\sum_{i \ge 0} G_i\big(a+\lambda(a)t, b+\lambda(b)t, c+\lambda(c)t\big) ~t^i\right).
\end{eqnarray*}
Expand the above equation as before we get the coefficient of $t$ as 
\begin{multline*}
G_1(a,b,c) + \{\lambda(a),b,c\} + \{a,\lambda(b),c\} + \{a,b,\lambda(c)\} - \lambda(\{a,b,c\}) \\
=~ G_1(a,b,c) + \delta_{II} \lambda(a,b,c) ~=~ G^\prime_1 (a,b,c). 
\end{multline*}
Therefore $(F^\prime_1, G^\prime_1)$ is the infinitesimal of $(F^\prime_t, G^\prime_t)$. Hence, given any $(2,3)$-cochain $(F^\prime_1, G^\prime_1)$ cohomologous to $(F_1, G_1)$ there exists a one-parameter formal deformation $(F^\prime_t, G^\prime_t)$ of $L$ whose infinitesimal is $(F^\prime_1, G^\prime_1)$ and is equivalent to $(F_t, G_t).$  
\end{proof}

\begin{Rem}
It follows that the integrability of an element in $Z^{(2, 3)}(L; L)$ depends on its cohomology class. If any element representing a $(2, 3)$-cohomology class is integrable, that is, it is infinitesimal of a formal one-parameter deformation of $L,$ then every other element in the same cohomology class is also integrable.
\end{Rem}

The above discussions reveal that the infinitesimal of a trivial deformation is a coboundary, though the converse may not be true. In other words, a non-trivial deformation may have an infinitesimal which is a coboundary. However, what we are assure of is the following.

\begin{Thm}\label{non-trivial-deformation}
A non-trivial deformation of a Lie-Yamaguti algebra $(L, [~,~], \{~,~,~\})$ is equivalent to a deformation whose $n$-infinitesimal is not a coboundary for some $n \geq 1.$
\end{Thm}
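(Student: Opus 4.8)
The plan is to mimic the classical Gerstenhaber argument for associative algebras, adapted to the $(2,3)$-cochain setting of Lie-Yamaguti algebras. Let $(L[[t]], F_t, G_t)$ be a non-trivial deformation. If its ($1$-)infinitesimal $(F_1,G_1)$ is not a coboundary we are already done with $n=1$, so assume $(F_1,G_1)=\delta(\lambda_1,\lambda_1)$ for some $\lambda_1\in C^1(L;L)$. The idea is to use the formal isomorphism $\Phi_t^{(1)}(a)=a+\lambda_1(a)t$ (with inverse $\Phi_t^{(1)-1}(a)=a+\sum_{i\ge 1}(-1)^i\lambda_1^i(a)t^i$, exactly as in the previous Proposition) to produce an equivalent deformation $(\bar F_t,\bar G_t)$ defined by $\bar F_t(a,b)=\Phi_t^{(1)-1}\big(F_t(\Phi_t^{(1)}(a),\Phi_t^{(1)}(b))\big)$ and similarly for $\bar G_t$. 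By the computation already carried out in the proof of the preceding Proposition, the coefficient of $t$ in $\bar F_t$ is $F_1+\delta_I(-\lambda_1)=F_1-\delta_I\lambda_1=0$, and likewise for $\bar G_t$; hence $\bar F_1=0=\bar G_1$, so $(\bar F_t,\bar G_t)$ has trivial $1$-infinitesimal and its first possibly-nonzero term sits in degree $\ge 2$.

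Next I would iterate. Having arranged $\bar F_i=0=\bar G_i$ for $1\le i\le n-1$ (so that $(\bar F_n,\bar G_n)$ is the $n$-infinitesimal), Proposition \ref{n-infinitesimal-cocycle} guarantees $(\bar F_n,\bar G_n)\in Z^{(2,3)}(L;L)$. If $(\bar F_n,\bar G_n)$ is not a coboundary, we stop: the original deformation is equivalent to one whose $n$-infinitesimal is not a coboundary. If instead $(\bar F_n,\bar G_n)=\delta(\lambda_n,\lambda_n)$, apply the formal isomorphism $\Phi_t^{(n)}(a)=a+\lambda_n(a)t^n$. A direct expansion shows that conjugating $(\bar F_t,\bar G_t)$ by $\Phi_t^{(n)}$ leaves the terms of degree $<n$ unchanged (they are all zero) and replaces the degree-$n$ term by $(\bar F_n,\bar G_n)-\delta(\lambda_n,\lambda_n)=0$, while altering only the coefficients of $t^{j}$ for $j>n$. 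Thus after this step the first possibly-nonzero term has degree $\ge n+1$, and the equivalence class of the deformation is unchanged (composition of formal isomorphisms is a formal isomorphism).

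Finally I would assemble the conclusion. Either the process terminates at some finite stage $n$ with an $n$-infinitesimal that is not a coboundary — giving the desired equivalent deformation — or it never terminates, in which case every $\bar F_i$ and $\bar G_i$ produced can be killed, and the infinite composite $\Phi_t=\cdots\circ\Phi_t^{(n)}\circ\cdots\circ\Phi_t^{(1)}$ is a well-defined $K$-linear automorphism of $L[[t]]$ (it converges $t$-adically since $\Phi_t^{(n)}$ differs from the identity only in degrees $\ge n$) carrying $(F_t,G_t)$ to the identity deformation $(F_0,G_0)$. That would make $(F_t,G_t)$ trivial, contradicting our hypothesis. Hence the process must stop at some $n\ge 1$, proving the theorem.

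The main obstacle is the bookkeeping in the inductive step: one must verify carefully that conjugation by $\Phi_t^{(n)}=\operatorname{Id}+\lambda_n t^n$ does not disturb the already-vanishing lower-order terms and changes the degree-$n$ term exactly by the coboundary $\delta(\lambda_n,\lambda_n)$ — for \emph{both} the bilinear part $\bar F_t$ and the trilinear part $\bar G_t$ simultaneously, and compatibly with all six axioms \eqref{LY1}--\eqref{LY6}. The $t$-adic convergence of the infinite composite in the non-terminating case also needs a clean statement, but that is standard once the degree-shifting property of each $\Phi_t^{(n)}$ is established.
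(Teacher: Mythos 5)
Your proposal follows essentially the same route as the paper: kill a coboundary $n$-infinitesimal by conjugating with $\Phi_t(a)=a+\lambda_n(a)t^n$, iterate, and observe that non-termination would force the deformation to be trivial (your $t$-adic convergence remark for the infinite composite makes this last step more explicit than the paper's ``the process must stop''). The only correction needed is a sign: with the conjugation $F'_t(a,b)=\Phi_t^{-1}(F_t(\Phi_t(a),\Phi_t(b)))$ the degree-$n$ coefficient picks up $+\delta(\lambda_n,\lambda_n)$, so you must choose $\lambda_n$ with $(F_n,G_n)=-\delta(\lambda_n,\lambda_n)$ (or conjugate by $a\mapsto a-\lambda_n(a)t^n$), exactly as the paper does.
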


\begin{proof}
Let $(F_t, G_t)$ be a formal one-parameter deformation of $L$ with its $n$-infinitesimal  $(F_n,G_n)$, $n \ge 1$. Then by Lemma \ref{n-infinitesimal-cocycle} we have $\delta(F_n,G_n) = (0,0).$ Now suppose $(F_n, G_n)$ is a coboundary, then $F_n = - \delta_I \lambda_n$ and $G_n = - \delta_{II}\lambda_n$ for some $\lambda_n \in C^1(L; L)$. Let $\Phi_t: L[[t]] \to L[[t]]$ be a formal isomorphism defined as 
\[\Phi_t(a) = a + \lambda_n(a) t^n.\]
We now define another deformation $(F^\prime_t, G^\prime_t)$  equivalent to $(F_t, G_t)$ as follows:
\begin{eqnarray*}
F^\prime_t(a,b) &=& \Phi_t^{-1}(F_t(\Phi_t(a),\Phi_t(b))) \quad=\quad \sum_{i \ge 0}F^\prime_i (a,b)t^i,\\
G^\prime_t (a,b,c) &=& \Phi_t^{-1}(G_t(\Phi_t(a),\Phi_t(b),\Phi_t(c))) \quad=\quad \sum_{i \ge 0} G^\prime_i (a,b,c) t^i.
\end{eqnarray*}
Explicitly, $F^\prime_t$ and $G^\prime_t$ are given by 
\begin{eqnarray*}
F^\prime_t (a,b) &=& [a,b] + \big([\lambda_n(a),b] + [a,\lambda_n(b)] - \lambda_n([a,b]) + F_n(a,b) \big) t^n \\
&~& \quad + F_{n+1}(a,b)t^{n+1} + \cdots \\
G^\prime_t (a,b,c) &=& \{a,b,c\} + \big( \{\lambda_n(a),b,c\} + \{a,\lambda_n(b),c\} + \{a,b,\lambda_n(c)\} - \lambda_n(\{a,b,c\})\\ 
&~& \quad  
+ G_n(a,b,c)\big)t^n + G_{n+1}(a,b,c)t^{n+1} + \cdots 
\end{eqnarray*}
Now suppose that $F_{n} \neq 0$. Then as $F_n = - \delta_I \lambda_n$, we have 
\[F_n(a,b) ~=~ - \big([\lambda_n(a),b] + [a,\lambda_n(b)] - \lambda_n([a,b]) \big)\]
Thus, the coefficient of $t^n$ in $F^\prime_t$ is zero. In case $F_n=0,$ then $\delta_I\lambda_n(a,b) = 0$ and hence coefficient of $t^n$ is again zero. Now by a similar argument the coefficient of $t^n$ in $G^\prime_t$ is also zero. Thus $(F^\prime_i, G^\prime_i)=(0,0)$ for all $1 \leq i \leq n$. Hence we can repeat the same argument to kill off an infinitesimal that is a coboundary and the process must stop if the deformation is non-trivial. This completes the proof of the theorem. 
\end{proof}

The notion of rigidity is defined as follows.

\begin{Def}
A Lie-Yamaguti algebra  $(L, [~,~], \{~,~,~\})$ is said to be rigid if every deformation of $L$ is a trivial deformation.
\end{Def}

As a consequence of Theorem \ref{non-trivial-deformation} we have the following sufficient condition for rigidity of a Lie-Yamaguti algebra.

\begin{Thm}
Let $(L,[~,~],\{~,~,~\})$ be a Lie-Yamaguti algebra with $H^{(2,3)}(L;L) = 0$, then $L$ is rigid.  
\end{Thm}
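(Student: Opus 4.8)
The plan is to show that if $H^{(2,3)}(L;L) = 0$, then every formal one-parameter deformation $(F_t,G_t)$ of $L$ is trivial, i.e.\ equivalent to the identity deformation. The strategy is an inductive ``killing off infinitesimals'' argument, leveraging Theorem~\ref{non-trivial-deformation} together with the vanishing hypothesis.

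\medskip
First I would observe that the hypothesis $H^{(2,3)}(L;L) = 0$ means $Z^{(2,3)}(L;L) = B^{(2,3)}(L;L)$, so \emph{every} $(2,3)$-cocycle is a coboundary. Now let $(F_t, G_t)$ be an arbitrary deformation of $L$. If $(F_t,G_t)$ is already the identity deformation there is nothing to prove; otherwise it has an $n$-infinitesimal $(F_n, G_n)$ for some $n \ge 1$, which by Proposition~\ref{n-infinitesimal-cocycle} is a $(2,3)$-cocycle, hence by hypothesis a coboundary, say $(F_n,G_n) = \delta(\lambda_n,\lambda_n)$ with $\lambda_n \in C^1(L;L)$. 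Following the construction in the proof of Theorem~\ref{non-trivial-deformation}, applying the formal isomorphism $\Phi_t(a) = a + \lambda_n(a) t^n$ (more precisely $\Phi_t(a)=a-\lambda_n(a)t^n$ with the correct sign so that the $t^n$-coefficient cancels) produces an equivalent deformation $(F'_t, G'_t)$ whose coefficients in degrees $1,\dots,n$ all vanish; that is, either $(F'_t,G'_t)$ is the identity deformation, or its lowest nonzero infinitesimal occurs in degree strictly greater than $n$.

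\medskip
The main issue is that this process need not terminate after finitely many steps, so a naive induction only gives equivalence ``modulo $t^{N}$'' for every $N$. To conclude genuine triviality one must take a limit: I would set up a sequence of formal isomorphisms $\Phi^{(n)}_t(a) = a + \lambda_n(a)t^n$ (for $n=1,2,3,\dots$), each killing the lowest surviving infinitesimal, and then argue that the infinite composite $\Psi_t := \cdots \circ \Phi^{(3)}_t \circ \Phi^{(2)}_t \circ \Phi^{(1)}_t$ is a well-defined formal automorphism of $L[[t]]$. This convergence is legitimate in the $t$-adic topology on $L[[t]]$: since $\Phi^{(n)}_t$ differs from the identity only in order $\ge t^n$, the partial composites stabilize modulo any fixed power of $t$, so $\Psi_t$ exists as a $K$-linear automorphism of the required form $\Psi_t(a) = a + \sum_{i\ge 1}\psi_i(a)t^i$. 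By construction $\Psi_t$ carries $(F_t,G_t)$ to a deformation all of whose positive-degree coefficients vanish, i.e.\ to the identity deformation. Hence $(F_t,G_t)$ is trivial, and since $(F_t,G_t)$ was arbitrary, $L$ is rigid.

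\medskip
The step I expect to be the main obstacle is the convergence/limit argument — verifying that the infinite composition of the $\Phi^{(n)}_t$ is well-defined and is itself a formal isomorphism in the sense of the definition, and that it simultaneously trivializes both $F_t$ and $G_t$. Everything else is a direct appeal to Proposition~\ref{n-infinitesimal-cocycle}, the hypothesis $H^{(2,3)}(L;L)=0$, and the explicit computation already carried out in the proof of Theorem~\ref{non-trivial-deformation}; the $t$-adic completeness of $\mathbb{K}[[t]]$ is what makes the limiting step rigorous, and I would phrase it in terms of the filtration by powers of $t$ rather than any analytic notion of convergence.
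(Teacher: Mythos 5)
Your proposal is correct, but it is organized differently from the paper's proof. The paper argues by contradiction in two lines: if the deformation $(F_t,G_t)$ were non-trivial, then by Theorem~\ref{non-trivial-deformation} it would be equivalent to a deformation whose $n$-infinitesimal is \emph{not} a coboundary for some $n\geq 1$; since that $n$-infinitesimal is a $(2,3)$-cocycle by Proposition~\ref{n-infinitesimal-cocycle}, this contradicts $H^{(2,3)}(L;L)=0$. All of the iterative ``killing off infinitesimals'' work is thus delegated to Theorem~\ref{non-trivial-deformation}, whose proof ends with the assertion that one can repeat the gauge change and that ``the process must stop if the deformation is non-trivial.'' Your version runs the same iteration directly (in the contrapositive direction) and, importantly, confronts the point the paper leaves implicit: when the process does \emph{not} stop, one must justify that the infinite composite $\Psi_t=\cdots\circ\Phi^{(3)}_t\circ\Phi^{(2)}_t\circ\Phi^{(1)}_t$ is a genuine formal automorphism. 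Your $t$-adic convergence argument (each $\Phi^{(n)}_t$ is the identity modulo $t^n$, so the partial composites stabilize modulo every fixed power of $t$, and the limit trivializes $F_t$ and $G_t$ simultaneously) is the standard and correct way to do this, and it in fact patches a small gap in the paper's treatment rather than introducing one. The trade-off: the paper's route is shorter on the page because the heavy lifting is hidden in the cited theorem, while yours is self-contained and more rigorous about the limiting step; your side remark about the sign of $\lambda_n$ is harmless since the sign can be absorbed into $\lambda_n$, exactly as the paper does by writing $F_n=-\delta_I\lambda_n$, $G_n=-\delta_{II}\lambda_n$.
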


\begin{proof} 
Let $(L[[t]], F_t, G_t)$ be a formal one-parameter deformation of a Lie-Yamaguti algebra $L$ which is non-trivial. Then by Theorem \ref{non-trivial-deformation}  $( F_t, G_t)$ is equivalent to a deformation $(F^\prime_t, G^\prime_t)$ whose $n$-infinitesimal (which is a $(2, 3)$-cocycle by Proposition \ref{n-infinitesimal-cocycle}) is not a coboundary, for some $n \geq 1.$ This contradicts the fact that $H^{(2,3)}(L;L) = 0.$
\end{proof}

We end our discussion with an example of a rigid Lie-Yamaguti algebra in the next section. 

\vspace{0.1cm}
\section{\Large Free Object in the category of Lie-Yamaguti algebras}\label{$6$}
The aim of this last section is to construct the free object in the category of Lie-Yamaguti algebra and then prove that it is rigid.

Let $\mathbb K$ be a field and $B$ be a finite set. Let $L_0$ be a Lie-Yamaguti algebra over $\mathbb{K}$ with $i:B \to L_0$ a set map. 

\begin{Def}
	The Lie-Yamaguti algebra $L_0$ is called free over the set $B$ if for any Lie-Yamaguti algebra $L$ and any set map $f:B \to L$, there is a unique Lie-Yamaguti algebra morphism $\widetilde{f}:L_0 \to L$ such that the following diagram is commutative.
	\[\begin{tikzcd}
		B & {L_0} \\
		& L
		\arrow["f"', from=1-1, to=2-2]
		\arrow["{\widetilde{f}}", from=1-2, to=2-2]
		\arrow["i", from=1-1, to=1-2]
	\end{tikzcd}\]
\end{Def}
\begin{Thm}
	For any finite set $B$, there exists a free Lie-Yamaguti algebra $\mathcal{L}(B)$ over $B$, which is unique up to isomorphism.  
\end{Thm}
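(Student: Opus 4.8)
The plan is to follow the classical construction of free objects in a variety of multilinear algebras: build the absolutely free algebra on $B$ equipped with one bilinear and one trilinear operation, impose anti-symmetry, and then pass to the quotient by the ideal generated by all instances of the remaining Lie-Yamaguti identities. Uniqueness is the usual formal consequence of the universal property: if $L_0$ and $L_0'$ are both free over $B$, applying the universal property of each to the structure map of the other yields Lie-Yamaguti morphisms $L_0 \to L_0'$ and $L_0' \to L_0$ whose composites commute with the structure maps $B \to L_0$ and $B \to L_0'$; by the uniqueness clause in the universal property these composites must be the respective identities, whence $L_0 \cong L_0'$.

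For existence, set $V := \mathbb{K}B$, the vector space with basis $B$, and let $\mathcal{F}(B) = \bigoplus_{n \ge 1}\mathcal{F}_n$ be the free graded $\mathbb{K}$-vector space carrying one bilinear operation $[~,~]$ and one trilinear operation $\{~,~,~\}$ over $V$: put $\mathcal{F}_1 := V$, and for $n \ge 2$ let $\mathcal{F}_n$ be spanned by the formal expressions $[u,v]$ with $u \in \mathcal{F}_i$, $v \in \mathcal{F}_j$, $i+j = n$, together with the formal expressions $\{u,v,w\}$ with $u \in \mathcal{F}_i$, $v \in \mathcal{F}_j$, $w \in \mathcal{F}_k$, $i+j+k = n$ (equivalently, $\mathcal{F}(B)$ is the linear span of planar rooted trees with binary and ternary internal vertices and leaves labelled by $B$, with the operations given by grafting at the root). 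Since $B$ is finite, every $\mathcal{F}_n$ is finite-dimensional. Let $M(B)$ be the quotient of $\mathcal{F}(B)$ by the graded subspace generated by all $[u,v]+[v,u]$ and all $\{u,v,w\}+\{v,u,w\}$, so that the induced operations on $M(B)$ satisfy $(\ref{LY1})$ and $(\ref{LY2})$. Finally, let $I \subseteq M(B)$ be the ideal (closed under the bilinear operation on both sides and under the trilinear operation in each of its slots) generated by all elements obtained from the left-hand sides of $(\ref{LY3})$, $(\ref{LY4})$, $(\ref{LY5})$ and $(\ref{LY6})$ by substituting arbitrary elements of $M(B)$ for $x,y,z,u,v,w$, set $\mathcal{L}(B) := M(B)/I$, and let $i : B \to \mathcal{L}(B)$ be the composite $B \hookrightarrow V \twoheadrightarrow M(B) \twoheadrightarrow \mathcal{L}(B)$. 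All the imposed relations are homogeneous, so $\mathcal{L}(B)$ is graded with finite-dimensional graded components; and by construction it satisfies $(\ref{LY1})$--$(\ref{LY6})$, hence is a Lie-Yamaguti algebra, generated as such by $i(B)$.

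It then remains to verify the universal property. Given a Lie-Yamaguti algebra $L$ and a set map $f : B \to L$, extend $f$ first to a linear map $V \to L$ and then over $\mathcal{F}(B)$ by the recursion $[u,v] \mapsto [f(u),f(v)]$ and $\{u,v,w\} \mapsto \{f(u),f(v),f(w)\}$, which is well defined by bilinearity and trilinearity of the brackets of $L$. Since $L$ satisfies $(\ref{LY1})$ and $(\ref{LY2})$, this map kills the relations defining $M(B)$, so it factors through $M(B)$; since $L$ satisfies $(\ref{LY3})$--$(\ref{LY6})$, it annihilates $I$, so it factors further through a Lie-Yamaguti morphism $\widetilde{f} : \mathcal{L}(B) \to L$ with $\widetilde{f}\circ i = f$. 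Uniqueness of $\widetilde{f}$ is immediate, since a morphism of Lie-Yamaguti algebras out of $\mathcal{L}(B)$ is determined by its values on the generating set $i(B)$. Alternatively, existence could be deduced from the general fact that every variety of universal algebras admits free objects, but the explicit construction above is preferable for being self-contained and for exhibiting the natural grading.

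The only step that requires genuine care is the precise set-up of the absolutely free algebra $\mathcal{F}(B)$ and the compatibility checks around it: one must pin down the underlying combinatorial data (trees with binary and ternary internal vertices, leaves labelled by $B$), confirm that the recursive extension of $f$ is well defined, and, most importantly, check that the stated generators of $I$ indeed generate an ideal closed under both operations in all slots, so that $M(B)/I$ inherits well-defined operations and the identities $(\ref{LY3})$--$(\ref{LY6})$ hold identically in it. Once $\mathcal{F}(B)$ and $M(B)$ are in place the rest is routine bookkeeping; I anticipate no real obstruction, and finiteness of $B$ is used only to keep the graded pieces finite-dimensional.
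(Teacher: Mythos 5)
Your proposal is correct and follows essentially the same route as the paper: both build the absolutely free binary-and-ternary algebra on $B$ as the span of planar rooted trees with binary and ternary internal vertices (grafting giving the two operations), quotient by the ideal generated by all instances of the Lie-Yamaguti identities, and obtain uniqueness formally from the universal property. The only differences are cosmetic --- you impose anti-symmetry in a separate preliminary quotient and spell out the verification of the universal property, which the paper leaves as ``easy to see.''
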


\begin{proof}
Since free Lie-Yamaguti algebra over $B$ is defined by the universal property it must be unique if exists. So it is enough to prove the existence of free Lie-Yamaguti algebra over $B$.  We proceed as follows.

We identify an element $b\in B$ with a planar $1$-ary tree with its leaf as $b,$ thus, identify $B$ with a set $\mathcal A$ of planar $1$-ary trees labelled by $B$ and call them {\it alphabets}. Therefore,
	$$\mathcal{A}:= \{\text{Set of all planar $1$-ary trees with their leaves as elements of $B$}\}.$$
So, the elements of $\mathcal{A}$ may be represented pictorially as follows
\newpage\noindent
{\bf Picture of $1$-ary tree with leaf $b$:}
	\begin{center}
		\begin{tikzpicture}
			\node (r) at (0,0) {};
			\node[circle,draw,fill=black,inner sep=1.5pt] (b) at (0,1) {};
			\node at (0,1.4) {b};
			
			\path[] (r) edge (b); 	
		\end{tikzpicture}
	\end{center}
where $b \in B$. Next, we consider two more sets:
\smallskip 
	$$\mathcal{B}:= \{\text{Set of all planar, binary, complete, rooted trees with leaves labelled by $B$}\},$$
	$$\mathcal{T}:= \{\text{Set of all planar, ternary, complete, rooted trees with leaves labelled by $B$}\}.$$

\medskip\noindent
For example, pictorially an element of $\mathcal B$ with two leaves $a, b \in B$ is represented as follows.\\
{\bf Picture of planar binary tree with leaves $a$ and $b$:}

\begin{center}
		\begin{tikzpicture}
			\node (r) at (0,-1) {};
			\node[circle,draw,fill=black, inner sep=2pt] (ab) at (0,0) {};
			\node[circle,draw,fill=black, inner sep=2pt] (a) at (-1,1) {};
			\node at (-1,1.4) {$a$};
			\node[circle,draw,fill=black, inner sep=2pt] (b) at (1,1) {};
			\node at (1,1.4) {$b$};
			
			\path 
			(r) edge (ab)
			(ab) edge (a)
			(ab) edge (b)
			;
		\end{tikzpicture}
	\end{center} 

\noindent
Note that we may obtain all elements of $\mathcal B$ starting from the given alphabates in $\mathcal A$ by a method of {\it grafting} which we call type $I$ grafting, which alternatively, may be expressed as words by introducing a binary operation $``\cdot"$ on $B$ taking values in $\mathcal B.$ 

\smallskip
More explicitly, the above planar binary tree  is obtained by grafting the $1$-ary tree with leaf $a$ and the $1$-ary tree with leaf $b,$ in order, from left to right. Equivalently, we may represent this planar binary tree as a word by $a\cdot b.$ Similarly, for any elements $a,b,c \in B$, the words 
	$(a \cdot b) \cdot c$  and $a \cdot (b \cdot c)$ are represented by the following trees.\\
	
	\begin{center}
		\begin{tikzpicture}
			\begin{scope}
				\node[circle,draw,fill=black,inner sep=2pt] (a) at (-2,2) {};
				\node at (-2,2.4) {$a$};
				\node[circle,draw,fill=black,inner sep=2pt] (b) at (0,2) {};
				\node at (0,2.4) {$b$};
				\node[circle,draw,fill=black,inner sep=2pt] (ab) at (-1,1) {};
				\node[circle,draw,fill=black,inner sep=2pt] (c) at (1,1) {};
				\node at (1,1.4) {$c$};
				\node[circle,draw,fill=black,inner sep=2pt] (abc) at (0,0) {};
				\node (r) at (0,-1) {};
				
				\path[] (a) edge (ab)
				(b) edge (ab)
				(c) edge (abc)
				(ab) edge (abc)
				(abc) edge (r) 
				;  
			\end{scope}
			\begin{scope}[xshift=6cm]
				\node[circle,draw,fill=black,inner sep=2pt] (c) at (2,2) {};
				\node at (2,2.4) {$c$};
				\node[circle,draw,fill=black,inner sep=2pt] (b) at (0,2) {};
				\node at (0,2.4) {$b$};
				\node[circle,draw,fill=black,inner sep=2pt] (a) at (-1,1) {};
				\node at (-1,1.4) {$a$};
				\node[circle,draw,fill=black,inner sep=2pt] (bc) at (1,1) {};
				\node[circle,draw,fill=black,inner sep=2pt] (abc) at (0,0) {};
				\node (r) at (0,-1) {};
				
				\path[]
				(b) edge (bc)
				(c) edge (bc)
				(bc) edge (abc)
				(a) edge (abc)
				(abc) edge (r) 
				;  
			\end{scope}
		\end{tikzpicture}
	\end{center}
	
\noindent
Further, pictorially an element of $\mathcal T$ with leaves $a, b, c \in B$ is represented as follows.\\
{\bf Picture of planar ternary tree with leaves $a$, $b$ and $c$:}
\begin{center}
\begin{tikzpicture}
\node (r) at (0,-1) {};
\node[circle,draw,fill=black, inner sep=2pt] (abc) at (0,0) {};
\node[circle,draw,fill=black, inner sep=2pt] (a) at (-1,1) {};
\node at (-1,1.4) {$a$};
\node[circle,draw,fill=black, inner sep=2pt] (b) at (0,1) {};
\node at (0,1.4) {$b$};
\node[circle,draw,fill=black, inner sep=2pt] (c) at (1,1) {};
\node at (1,1.4) {$c$};

\path[]
(r) edge (abc)
(a) edge (abc)
(b) edge (abc)
(c) edge (abc)
;
\end{tikzpicture}
\end{center}

As before, we may obtain all elements of $\mathcal T$ starting from the given alphabets in $\mathcal A$ by a method of {\it grafting} which we call type $II$ grafting, which alternatively, may be expressed as words by introducing a ternary operation  $``\star"$ on $B$ taking values in $\mathcal T.$ 

\smallskip
More explicitly, the above planar ternary tree is obtained by grafting the $1$-ary tree with leaf $a$, the $1$-ary tree with leaf $b,$ and the $1$-are tree with leaf $c,$ in order from left to right. Equivalently, we may represent this planar ternary tree as an word by $a\star b \star c.$

\noindent
Note that for any element $a,b,c \in B$, the word $a \star b \star c$ is represented by the following tree.

	\begin{center}
		\begin{tikzpicture} 
			\node (r) at (0,-1) {};
			\node[circle,draw,fill=black, inner sep=2pt] (abc) at (0,0) {};
			\node[circle,draw,fill=black, inner sep=2pt] (a) at (-1,1) {};
			\node at (-1,1.4) {a};
			\node[circle,draw,fill=black, inner sep=2pt] (b) at (0,1) {};
			\node at (0,1.4) {b};
			\node[circle,draw,fill=black, inner sep=2pt] (c) at (1,1) {};
			\node at (1,1.4) {c};
			
			\path[]
			(r) edge (abc)
			(a) edge (abc)
			(b) edge (abc)
			(c) edge (abc)
			;
		\end{tikzpicture}
	\end{center}
	
Next, we expand the set $\mathcal{A} \cup \mathcal{B} \cup \mathcal{T}$ recursively, by repeated applications of type $I$ and type $II$ grafting and thereby, extend the binary and ternary operations to a bigger set as follows.

For any $\alpha, \beta \in \mathcal{A} \cup \mathcal{B} \cup \mathcal{T}$, denote $\alpha \cdot \beta$ by the tree
	
	\begin{center}
		\begin{tikzpicture}
			\node (r) at (0,-1) {};
			\node[circle,draw,fill=black, inner sep=2pt] (ab) at (0,0) {};
			\node[circle,draw,fill=black, inner sep=2pt] (a) at (-1,1) {};
			\node at (-1,1.4) {$\alpha$};
			\node[circle,draw,fill=black, inner sep=2pt] (b) at (1,1) {};
			\node at (1,1.4) {$\beta$};
			
			\path 
			(r) edge (ab)
			(ab) edge (a)
			(ab) edge (b)
			;
		\end{tikzpicture}
	\end{center} 
Denote by $\mathcal{G}_1$ the set of all possible trees obtained by type $I$ grafting. For example, if $\alpha = a \star b \star c \in \mathcal{T},$ $ a, b, c \in \mathcal A$ and $\beta = d \in \mathcal A$ then the word $(a \star b \star c) \cdot d$ corresponds to the tree (given below) in $\mathcal{G}_1$. 
	
	\begin{center}
		\begin{tikzpicture}
			\node (r) at (0,-1) {};
			\node[circle,draw,fill=black,inner sep=2pt] (alphabeta) at (0,0) {};
			\node[circle,draw,fill=black,inner sep=2pt]
			(alpha) at (-1,1) {};
			\node[circle,draw,fill=black,inner sep=2pt]
			(beta) at (1,1) {};
			\node at (1,1.4) {d};
			\node[circle,draw,fill=black,inner sep=2pt] 
			(a) at (-2,2) {};
			\node at (-2,2.4) {a};
			\node[circle,draw,fill=black,inner sep=2pt] 
			(b) at (-1,2) {};
			\node at (-1,2.4) {b};
			\node[circle,draw,fill=black,inner sep=2pt]
			(c) at (0,2) {};	
			\node at (0,2.4) {c};
			
			\path
			(r) edge (alphabeta)
			(alphabeta) edge (alpha)
			(alphabeta) edge (beta)
			(alpha) edge (a)
			(alpha) edge (b)
			(alpha) edge (c)
			;	
		\end{tikzpicture}
	\end{center}

\noindent
For any $\alpha, \beta, \gamma \in \mathcal{A} \cup \mathcal{B} \cup \mathcal{T}$, denote $\alpha \star \beta \star \gamma$ by the tree
\bigskip
	\begin{center}
		\begin{tikzpicture}
			\node (r) at (0,-1) {};
			\node[circle,draw,fill=black, inner sep=2pt] (abc) at (0,0) {};
			\node[circle,draw,fill=black, inner sep=2pt] (a) at (-1,1) {};
			\node at (-1,1.4) {$\alpha$};
			\node[circle,draw,fill=black, inner sep=2pt] (b) at (0,1) {};
			\node at (0,1.4) {$\beta$};
			\node[circle,draw,fill=black, inner sep=2pt] (c) at (1,1) {};
			\node at (1,1.4) {$\gamma$};
			
			\path[]
			(r) edge (abc)
			(a) edge (abc)
			(b) edge (abc)
			(c) edge (abc)
			;
		\end{tikzpicture}
	\end{center}
Denote by $\mathcal{G}_2$ the set of all possible trees obtained by type $II$ grafting. For example if $\alpha = a \in \mathcal{A}$, $\beta = b \in \mathcal{A}$, and $\gamma = c \cdot d \in \mathcal{B}$, then the word $a \star b \star (c \cdot d)$ corresponds to the following tree. 
	\begin{center}
		\begin{tikzpicture}
			\node (r) at (0,-1) {};
			\node[circle,draw,fill=black, inner sep=2pt] (abc) at (0,0) {};
			\node[circle,draw,fill=black, inner sep=2pt] (a) at (-1,1) {};
			\node at (-1,1.4) {$a$};
			\node[circle,draw,fill=black, inner sep=2pt] (b) at (0,1) {};
			\node at (0,1.4) {$b$};
			\node[circle,draw,fill=black, inner sep=2pt] (gamma) at (1,1) {};
			\node[circle,draw,fill=black, inner sep=2pt]
			(c) at (0,2) {};
			\node at (0,2.4) {c};
			\node[circle,draw,fill=black, inner sep=2pt]
			(d) at (2,2) {};
			\node at (2,2.4) {d};
			
			\path
			(r) edge (abc)
			(a) edge (abc)
			(b) edge (abc)
			(gamma) edge (abc)
			(gamma) edge (c)
			(gamma) edge (d)
			;
		\end{tikzpicture}
	\end{center}

Let us denote by $\mathcal W := \mathcal{G}_1 \cup \mathcal{G}_2$ the set of all possible words generated by $B$ as described above. Note that $\mathcal W$ is closed under the binary operation $``\cdot"$ and the ternary operation $``\star".$ Let  $\mathcal{W}(B)$ denote the $\mathbb{K}$-vector space generated by $\mathcal W.$  Extend the operations $``\cdot"$ and  $``\star"$ linearly over $\mathcal{W}(B).$ Then, $(\mathcal{W}(B), \cdot, \star)$ is the free non-commutative, non-associative algebra equipped with a binary and a ternary operations. 
	
\medskip
We call a subspace of $\mathcal{W}(B)$ an ideal in $\mathcal{W}(B)$ if it is closed under $``\cdot"$ and $``\star"$ by elements of $\mathcal{W}(B)$. Let $I$ be the ideal of $\mathcal{W}(B)$ generated by 
	\begin{eqnarray*}
		a \cdot a,\quad\quad\quad a \star a \star b,
	\end{eqnarray*}
	\begin{eqnarray*}
		\circlearrowleft_{(a,b,c)}(a\cdot b) \cdot c ~+ \circlearrowleft_{(a,b,c)} a \star b \star c, \quad\quad \circlearrowleft_{(a,b,c)} (a \cdot b) \star c \star d,
	\end{eqnarray*}
	\begin{eqnarray*}
		a \star b \star (c \cdot d) - (a \star b \star c) \cdot d - c \cdot (a \star b \star d), 
	\end{eqnarray*}
	\begin{eqnarray*}
		a \star b \star ( x \star y \star z) - (a \star b \star x) \star y \star z - x \star (a \star b \star y) \star z - x \star y \star (a \star b \star z)
	\end{eqnarray*}
for all $a,b,c,x,y,z \in \mathcal{W}(B).$ 

Consider $\mathcal{L}(B) = \mathcal{W}(B) / I.$ Then, it is clear that the vector space $\mathcal{L}(B)$ equipped with the binary operation $``\cdot"$  and the ternary operation  $``\star"$  induced from $\mathcal{W}(B)$ becomes a Lie-Yamaguti algebra over $\mathbb{K}$. Furthermore, it is easy to see that with the canonical map $i:B \to \mathcal{L}(B)$, $\mathcal{L}(B)$ is  a free Lie-Yamaguti algebra over the set $B$. 
\end{proof}

\begin{Def}
Let $V$ be a finite-dimensional vector space over $\mathbb K$ with basis a finite set $B.$ The free Lie-Yamaguti algebra over $V,$ denoted by $\mathcal L (V),$  is by definition 
$$ \mathcal{L}(V):= \mathcal{L}(B).$$
\end{Def}

\noindent
The final result of this paper is the following theorem.

\begin{Thm}
	The free Lie-Yamaguti algebra $\mathcal{L}(V)$ over a vector space $V$ is rigid. 
\end{Thm}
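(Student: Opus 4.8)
The plan is to establish rigidity directly from the universal property of the free object rather than through the cohomological criterion: I will show that every formal one-parameter deformation of $\mathcal{L}(V)=\mathcal{L}(B)$ (where $B$ is a basis of $V$) is equivalent to the identity deformation of Remark \ref{identity-deformation}. Proving instead that $H^{(2,3)}(\mathcal{L}(B);\mathcal{L}(B))=0$ and invoking the rigidity theorem above is available in principle, but analysing the Yamaguti complex of a free algebra directly appears considerably harder, so I avoid it. The crucial first step is the observation that the identity deformation of $\mathcal{L}(B)$ is again a \emph{free} Lie-Yamaguti algebra, but now over the coefficient ring $K=\mathbb{K}[[t]]$. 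Indeed, running the word-and-tree construction of $\mathcal{L}(B)$ verbatim over $K$ yields the free non-associative binary-ternary algebra $\mathcal{W}(B)\otimes_{\mathbb{K}}K=\mathcal{W}(B)[[t]]$, and the two-sided ideal it generates by the same relators (all of which already lie in $\mathcal{W}(B)$) is exactly $I\otimes_{\mathbb{K}}K$, because $K$ is free, hence flat, over $\mathbb{K}$; thus the free Lie-Yamaguti algebra over $K$ on $B$ is $\mathcal{L}(B)\otimes_{\mathbb{K}}K=\mathcal{L}(B)[[t]]$ equipped with the $K$-linear extensions of the original brackets, that is, the identity deformation. Concretely, this says: a homomorphism of Lie-Yamaguti algebras over $K$ out of the identity deformation is $K$-linearly determined by its restriction to the $\mathbb{K}$-basis $\mathcal{W}$ of $\mathcal{L}(B)$, and compatibility with the binary and ternary operations reduces such a homomorphism to an arbitrary set map $B\to L$ into any Lie-Yamaguti algebra $L$ over $K$, the relators being automatically sent to $0$ since $L$ satisfies the Lie-Yamaguti axioms.

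Now let $(F_t,G_t)$ be an arbitrary formal one-parameter deformation of $\mathcal{L}(B)$, and write $L':=(\mathcal{L}(B)[[t]],F_t,G_t)$, a Lie-Yamaguti algebra over $K$. Applying the universal property of the identity deformation to the set map $B\to L'$, $b\mapsto i(b)$, we obtain a unique homomorphism of Lie-Yamaguti algebras over $K$
\[
\Phi_t:\bigl(\mathcal{L}(B)[[t]],\,[~,~],\,\{~,~,~\}\bigr)\longrightarrow\bigl(\mathcal{L}(B)[[t]],\,F_t,\,G_t\bigr),\qquad \Phi_t\bigl(i(b)\bigr)=i(b),
\]
where on the left the brackets denote their $K$-linear extensions. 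By construction $\Phi_t$ is $K$-linear and satisfies $\Phi_t([a,b])=F_t(\Phi_t(a),\Phi_t(b))$ and $\Phi_t(\{a,b,c\})=G_t(\Phi_t(a),\Phi_t(b),\Phi_t(c))$ for all $a,b,c\in\mathcal{L}(B)$, which is precisely the condition for $\Phi_t$ to be an equivalence between the identity deformation and $(F_t,G_t)$, provided $\Phi_t$ is a formal isomorphism. To see the latter, reduce modulo $t$: since $F_t\equiv[~,~]$ and $G_t\equiv\{~,~,~\}$ modulo $t$, both the source and the target of $\Phi_t$ recover $\mathcal{L}(B)$ with its original brackets, and $\Phi_t\bmod t$ is then a $\mathbb{K}$-linear Lie-Yamaguti-algebra endomorphism of $\mathcal{L}(B)$ fixing each generator $i(b)$, so by the uniqueness clause in the universal property of $\mathcal{L}(B)$ over $\mathbb{K}$ it equals $\Id_{\mathcal{L}(B)}$. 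Hence $\Phi_t=\Id+t\psi_1+t^2\psi_2+\cdots$ with $\psi_i\in\Hom_{\mathbb{K}}(\mathcal{L}(B),\mathcal{L}(B))$; any such $K$-linear endomorphism of $\mathcal{L}(B)[[t]]$ is invertible, its inverse being $\sum_{i\ge 0}(-1)^i(\Phi_t-\Id)^i$, which converges $t$-adically since $\Phi_t-\Id$ raises the $t$-order by at least one. Therefore $\Phi_t$ is a formal isomorphism, $(F_t,G_t)$ is equivalent to the identity deformation and hence is trivial; as $(F_t,G_t)$ was arbitrary, $\mathcal{L}(V)$ is rigid.

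The only point that genuinely requires care — and which I would write out in detail — is the first step, namely that the identity deformation of $\mathcal{L}(B)$ has the freeness universal property over the ring $K=\mathbb{K}[[t]]$, whereas the construction and universal property of $\mathcal{L}(B)$ in the preceding pages were phrased over the field $\mathbb{K}$. This reduces to the base-change identity $\mathcal{W}(B)[[t]]/I[[t]]\cong\mathcal{L}(B)[[t]]$ — equivalently, that the relator ideal computed over $K$ coincides with $I\otimes_{\mathbb{K}}K$ — for which one uses flatness of $K$ over $\mathbb{K}$; alternatively one verifies the universal property by hand on the basis $\mathcal{W}$, exactly as in the proof that $\mathcal{L}(B)$ is free over $\mathbb{K}$. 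Everything downstream is then a purely formal consequence of this universal property together with the fact that the reduction modulo $t$ of the comparison map $\Phi_t$ is forced to be the identity.
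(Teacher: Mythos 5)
Your argument is correct, but it takes a genuinely different route from the paper. The paper proves rigidity cohomologically: given an arbitrary $(f,g)\in Z^{(2,3)}(\mathcal{L};\mathcal{L})$, it builds the abelian extension $0\to\bar{\mathcal{L}}\to\bar{\mathcal{L}}\oplus\bar{\mathcal{L}}\to\mathcal{L}\to 0$ whose structure maps are twisted by $(f,g)$, uses the universal property of $\mathcal{L}(B)$ to produce a Lie-Yamaguti splitting $\widetilde{\sigma}=(\lambda,\mathrm{id})$, and reads off $(f,g)=\delta(\lambda,\lambda)$; thus $H^{(2,3)}(\mathcal{L};\mathcal{L})=0$ and rigidity follows from the cohomological criterion (which itself rests on Theorem \ref{non-trivial-deformation}). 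You instead bypass cohomology entirely: you base-change the freeness to $K=\mathbb{K}[[t]]$, observe that the identity deformation is the free Lie-Yamaguti algebra over $K$ on $B$, and let the universal property manufacture the comparison map $\Phi_t$ into an arbitrary deformation $(F_t,G_t)$; reduction mod $t$ forces $\Phi_t\equiv\Id$, so $\Phi_t$ is a formal isomorphism and the deformation is trivial. Both proofs ultimately exploit the same freeness, but where the paper uses it to split an extension encoding a single cocycle, you use it to split the whole deformation at once. Your version is more self-contained (it does not invoke the chain of results reducing rigidity to $H^{(2,3)}=0$) and adapts verbatim to deformations over other complete local bases; the paper's version yields the strictly more informative statement $H^{(2,3)}(\mathcal{L};\mathcal{L})=0$, which controls infinitesimal data independently of any integrability discussion. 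The one step you rightly single out — that the relator ideal over $K$ is $I\otimes_{\mathbb{K}}K$, so that $\mathcal{L}(B)\otimes_{\mathbb{K}}K$ really is free over $K$ — does need the writing-out you promise (polarize the relators $a\cdot a$ and $a\star a\star b$ using characteristic $0$ so that all generators become multilinear, and use that coefficients of elements of $L\otimes_{\mathbb{K}}K$ lie in finite-dimensional subspaces); with that supplied, the proof is complete.
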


\begin{proof}
	Let us denote $\mathcal{L}(V)$ simply by $\mathcal{L}$. To show that $\mathcal{L}$ is rigid, it is enough to show $H^{(2,3)}(\mathcal{L};\mathcal{L}) = (0)$. Let $(f,g) \in Z^{(2,3)}(\mathcal{L};\mathcal{L})$ and $\bar{\mathcal{L}}$ denote the underlying vector space of $\mathcal{L}$. Now consider the short exact sequence of Lie-Yamaguti algebras
	\[\begin{tikzcd}
		0 & {\bar{\mathcal{L}}} & {\bar{\mathcal{L}} \oplus \bar{\mathcal{L}}} & {\mathcal{L}} & 0
		\arrow[from=1-1, to=1-2]
		\arrow["j", from=1-2, to=1-3]
		\arrow["\pi", from=1-3, to=1-4]
		\arrow[from=1-4, to=1-5]
	\end{tikzcd}\] 
	where $\bar{\mathcal{L}}$ is considered as an abelian Lie-Yamaguti algebra, and the Lie-Yamaguti algebra structure on $\bar{\mathcal{L}} \oplus \bar{\mathcal{L}}$ is defined as 
	\[[(a_1,b_1),(a_2,b_2)] := \big([a_1,b_2] + [b_1,a_2] - f(b_1,b_2),[b_1,b_2]\big)\]
	\[\{(a_1,b_1),(a_2,b_2),(a_3,b_3)\} := \big(\{a_1,b_2,b_3\}+\{b_1,a_2,b_3\}+\{b_1,b_2,a_3\}-g(b_1,b_2,b_3),\{b_1,b_2,b_3\}\big),\]
	$j$ being the inclusion into the first factor and $\pi$ the projection onto the second factor. This sequence splits as a sequence of vector spaces. So there exists a $\mathbb{K}$-linear map $\sigma:\mathcal{L} \to \bar{\mathcal{L}} \oplus \bar{\mathcal{L}}$ such that $\pi \circ \sigma= id_{\mathcal{L}}$. Hence $\sigma$ must be of the form $(\xi,id)$ where $\xi: \mathcal{L} \to \bar{\mathcal{L}}$ is a $\mathbb{K}$-linear map. Let $\sigma^\prime=\sigma|_V: V \to \bar{\mathcal{L}} \oplus \bar{\mathcal{L}}$. Since $\mathcal{L}$ is free over $V$ we get a Lie-Yamaguti algebra map $\widetilde{\sigma}: \mathcal{L} \to \bar{\mathcal{L}} \oplus \bar{\mathcal{L}}$ with $\tilde{\sigma} \circ i = \sigma^\prime$, with $i$ being the inclusion $V \hookrightarrow \mathcal{L}$. 
	\[\begin{tikzcd}
		V & {\mathcal{L}} \\
		& {\bar{\mathcal{L}} \oplus \bar{\mathcal{L}}}
		\arrow["{\sigma'}"', from=1-1, to=2-2]
		\arrow["i", from=1-1, to=1-2]
		\arrow["{\widetilde{\sigma}}", from=1-2, to=2-2]
	\end{tikzcd}\]
	This means that on $V,$ $\widetilde{\sigma}$ and $\sigma^\prime$ agree. Since $\pi$ and $\widetilde{\sigma}$  are both Lie-Yamaguti algebra maps we have $\pi \circ \widetilde{\sigma} = id$. Hence $\widetilde{\sigma}$ is of the form $(\lambda,id)$ for some $\mathbb{K}$-linear map $\lambda: \bar{\mathcal{L}} \to \bar{\mathcal{L}}$. Note that $\lambda \in C^1(\mathcal{L};\mathcal{L})$. Now as $\widetilde{\sigma}$ is a Lie-Yamaguti algebra morphism, we have 
	\[\widetilde{\sigma}[a,b] = [\widetilde{\sigma}(a),\widetilde{\sigma}(b)] \quad \text{and} \quad \widetilde{\sigma}\{a,b,c\} = \{\widetilde{\sigma}(a),\widetilde{\sigma}(b),\widetilde{\sigma}(c)\}\]
	Since $\widetilde{\sigma} = (\lambda,id)$ we get 
	\begin{eqnarray}
		\big(\lambda([a,b]),[a,b]\big)&=& [(\lambda(a),a),(\lambda(b),b)] \nonumber \\
		&=& \big([\lambda(a),b]+[a,\lambda(b_2)] - f(a,b),[a,b]\big) \label{e1}
		\\ \nonumber \\
		\big(\lambda(\{a,b,c\}),\{a,b,c\}\big) 
		&=& \{(\lambda(a),a),(\lambda(b),b),(\lambda(c),c)\} \nonumber\\
		&=&
		\big(\{\lambda(a),b,c\}+ \{a,\lambda(b),c\} + \{a,b,\lambda(c)\} - g(a,b,c),\{a,b,c\}\big) \nonumber \\
		& & \label{e2}
	\end{eqnarray}
	
	\vspace{-0.3cm}\noindent
	By the definition of coboundary maps we have 
	\begin{eqnarray*}
		\delta_I \lambda (a,b) &=& [\lambda(a),b]+[a,\lambda(b_2)] - f(a,b) \\ 
		\delta_{II} \lambda (a,b,c) &=& 
		\{\lambda(a),b,c\}+ \{a,\lambda(b),c\} + \{a,b,\lambda(c)\} - g(a,b,c)
	\end{eqnarray*}
	Equating the first coordinates of both sides of (\ref{e1}) and (\ref{e2}) we deduce, 
	\[(f,g) = \delta(\lambda,\lambda).\]
	So any $(2,3)$-cocycle in $C^{(2,3)}(\mathcal{L};\mathcal{L})$ is a $(2,3)$-coboundary. Hence $H^{(2,3)}(\mathcal{L};\mathcal{L}) = (0)$.
\end{proof}

\vspace{0.1in}
\noindent
{\bf Declarations}

\noindent\textbf{Conflict of interest} \\
The author declares that there is no conflict of interest. 

\noindent\textbf{Ethics approval} \\
This article does not contain any studies with human participants or animals performed by the authors. 

\noindent\textbf{Funding} \\
This research received no specific grant from any funding agency in the public, commercial, or not-for-profit sectors. 

\noindent\textbf{Data availability} \\
No data was used for the research described in the article.

\providecommand{\bysame}{\leavevmode\hbox to3em{\hrulefill}\thinspace}
\providecommand{\MR}{\relax\ifhmode\unskip\space\fi MR }
\providecommand{\MRhref}[2]{%
\href{http://www.ams.org/mathscinet-getitem?mr=#1}{#2}
}
\providecommand{\href}[2]{#2}

\end{document}